\DeclareMathAlphabet{\mathpzc}{OT1}{pzc}{m}{it}
\DeclareMathAlphabet{\mathpzc}{OT1}{pzc}{m}{it}
\newtheorem{thm}{Theorem}[section]
\newtheorem{lem}[thm]{Lemma}
\newtheorem{prop}[thm]{Proposition} 
\newtheorem{cor}[thm]{Corollary}
\newtheorem{rem}[thm]{Remark}
\newtheorem{ex}[thm]{Example}
\newtheorem{question}[thm]{Question}
\newtheorem{Defn}[thm]{Definition}
\title{ 
	On Generalised Danielewski Surfaces over fields of arbitrary characteristic
}
\author{
	Debojyoti Saha\\
	{\small{\it  Theoretical Statistics and Mathematics  Unit, Indian Statistical Institute,}}\\ 
	{\small{\it 203 B.T.Road, Kolkata-700108, India}}\\
	{\small{\it e-mail : debojyotisaha19@gmail.com}}}
\begin{document}
	\date{}
	\maketitle
	
	\abstract
	 In this paper we study exponential maps ($\mathbb{G}_a$-actions) on the  family of affine two dimensional surfaces of the form $f(x)y=\phi(x,z)$ over arbitrary fields, describe the Makar-Limanov invariant and Derksen invariant of these surfaces, give a complete characterization of isomorphisms between such surfaces and  display a subfamily which provides counterexamples to the cancellation problem. 
	
	\smallskip
	\noindent
	{\small {{\bf Keywords}. }}
	\smallskip
	Cancellation Problem, Exponential Map, Makar-Limanov Invariant, Derksen Invariant
	\smallskip
	\newline
	{\small{{\bf 2020 MSC}. Primary: 14R10, 13A50    ; 
			Secondary: 14R05, 13A02, 13B25.

			\section{Introduction}

			\indent
			
			Throughout the paper $R$ will denote a commutative ring with identity, $R^{[n]}$ a polynomial ring over $R$ in $n$ variables and $K$ will denote a field. For an affine $K$-domain $R$, ${\rm trdeg}_K(R)$ will denote the transcendental degree of $R$ over $K$. For convenience, we will call the coordinate ring of an affine surface to be a ``surface".
			\newline 
			In affine algebraic geometry, the Cancellation Problem asks:
            \begin{question}\label{q1}\it{If $R_1$, $R_2$ are two affine $K$ domains such that $R_1^{[1]}\cong_{K}R_2^{[1]}$ then is $R_1\cong_{K}R_2$}?
            \end{question}
            In general the answer to the above question is known to be negative. Complex surfaces, now known as Danielewski surfaces, defined by a polynomial of the form $X^nY-\varphi(Z)$, where $n\in \mathbb{N}$ and $\varphi(Z)\in \mathbb{C}[Z]$, were first studied by W. Danielewski \cite{ori_danielewski} to produce counter examples to Q \ref{q1}. Since then, for a better understanding of non-cancellative surfaces, studies have been undertaken on various generalisations of the Danielewski surfaces, invariants of $\mathbb{G}_a$-actions on these surfaces and their connections with the Cancellation Problem. In \cite{ng_sen_d.danielewski}, N. Gupta and S. Sen studied a collection of surfaces, of the form $\frac{K[X,Y,Z,T]}{(X^dY-P(X,Z),X^eT-Q(X,Y,Z))} $ for some specific polynomials $P\in K^{[2]}$, $Q\in K^{[3]}$ and positive integers $e$, $d$, which they named Double Danielewski surfaces. They have found a subcollection of such surfaces that gives counter examples to the Cancellation Problem. In \cite{pg_ng_gen_danielewski}, P. Ghosh and N. Gupta studied a collection of hypersurfaces of the form $\frac{K[X_1,...,X_n,Y,Z]}{({X_1}^{r_1}...{X_n}^{r_n}Y-F(X_1,...,X_n,Z))} $ for certain $r_i\geq1$ and $F\in K^{[n+1]}$ and have shown these hypersurfaces provide counter examples to the Cancellation Problem. A brief summary of results on various such surfaces can be found in \cite{jauslin_poloni}. 
            
            \indent
             In \cite{veloso}, A. C. Bianchi and M. O. Veloso determined the locally nilpotent derivations of  Danielewski-type surfaces over an algebraically closed field $K$ of characteristic $0$ defined by a more general polynomial of the form $f(X)Y-\varphi(X,Z)$ with some assumptions on $f$ and $\varphi$. In particular they determined the Makar-Limanov invariant and Derksen invariant of such surfaces (defined in Section 2). They also used these invariants to describe the automorphism groups of surfaces defined by polynomials of the form $f(X)Y-\varphi(Z)$.
             
             \indent
             Now results during the past decade, for instance, Neena Gupta's breakthrough result on the Zarisky Cancellation Problem in \cite{ng_inventions}, have shown the desirability of calculating these invariants on hypersurfaces  over fields of arbitrary characteristic.
             
             \indent 
             In this paper, we consider polynomials of the form $f(X)Y-\varphi(X,Z)$ over any field $K$ (not necessarily algebraically closed) of arbitrary characteristic, determine Makar-Limanov invariant and Derksen invariant of surfaces defined by them and using these invariants we prove that a subcollection of such surfaces provides new counter examples to the Cancellation Problem. More precisely, over any field $K$ (not necessarily algebraically closed) of arbitrary characteristic, we study surfaces of the following type:     
              \[A=\frac{K[X,Y,Z]}{(f(X)Y-\varphi(X,Z))}\] where $f=X^r+a_{r-1}X^{r-1}+...+a_1X+a_0$ is a monic polynomial in $X$ and $\varphi(X,Z)=Z^d+c_{d-1}(X)Z^{d-1}+...+c_1(X)Z+c_0(X)$ is such that $d\geq2$, $r\geq2$. In Section 3, we show that the ring of invariants of any non-trivial exponential map on the surface $A$ is $k[x]$, where $x$ is the image of $X$ in $A$  (Theorem \ref{mtheo}). Using this we calculate the Makar-Limanov and Derksen invariants of the ring $A$ (Corollary \ref{mtheinv}). In Section 4, we provide a complete characterization of isomorphisms between such surfaces (Theorem \ref{t6.1} and Theorem \ref*{t4.2}). In Section 5, we show that this collection of surfaces accommodates a subcollection of pairwise non-isomorphic surfaces that are stably isomorphic (Theorem \ref{th5.2}) thus providing a new class of counterexamples to the Cancellation Problem (Q \ref*{q1}). Before discussing the main results we recollect in Section 2 a few definitions and known results that are going to be used in this paper.
                
			\section{Preliminaries}
			\indent
			
			  Throughout this section $K$ will denote a field of arbitrary characteristic $p\geq 0$. We recall the definition of an exponential map on a $K$-algebra, the ring-theoretic formulation of $\mathbb{G}_a$-actions.
			\begin{Defn}
				\rm{Let $K$ be a field and $A$ be a $K$-algebra. Let $\phi: A\longrightarrow A^{[1]}$ be a $K$-algebra homomorphism. For an indeterminate $U$ over $A$, let us denote the map $\phi: A\longrightarrow A[U]$ by the notation $\phi_U$. The map $\phi$ is said to be an exponential map on $A$ if it satisfies the following two properties:}
				
				\begin{enumerate}
				\item[\rm(i)] $\epsilon_0\circ\phi_U $ is identity on $A$, where $\epsilon_0: A[U]\longrightarrow A$  is the evaluation map at $U=0$.
				
				\item[\rm(ii)] $\phi_V\circ\phi_U=\phi_{V+U}$, where $\phi_V: A[U]\longrightarrow A[U,V]$ is the extension of the  homomorphism $\phi_V: A\longrightarrow A[U]$ by $\phi_V(U)=U$.
				\end{enumerate} 
			\end{Defn}
			
			For an element $a\in A$, we define the $\phi$-degree of $a$, denoted by $\text{deg}_{\phi}(a)$, to be the degree of the polynomial $\phi(a)\in A^{[1]}$.
			Let $exp(A)$ denote the collection of all the exponential maps on $A$.
			\begin{Defn}
				\rm{ Let $R$ be an integral domain. For a totally ordered Abelian group $G$, a function $\delta:R\longrightarrow G\cup\{-\infty\}$ is said to be a degree function if for  all $a,b\in R$ :}
				\begin{enumerate}
				\item[\rm(i)] For $a\in R$, $\delta(a)=-\infty \Leftrightarrow a=0$
				\item[\rm(ii)] $\delta(ab)=\delta(a)+\delta(b)$
				\item[\rm(iii)]  $\delta(a+b)\leq max\{\delta(a),\delta(b)\}$.
			\end{enumerate}
			\end{Defn}
			Note that for any $\phi\in exp(A)$, $\text{deg}_{\phi}:A\longrightarrow \mathbb{Z}\cup \{-\infty\}$ is a degree function.
			For $\phi\in exp(A)$ on a $K$-algebra $A$, the subring $A^{\phi}:=\{a\in A: \phi(a)=a\}$ is called the ring of invariants or constants of the map $\phi$.
			\indent
			
			An exponential map is called trivial if $A=A^{\phi}$. Note that an element $a\in A^{\phi}$ iff $\text{deg}_{\phi}(a)\leq 0$.
			\indent
			
			We can also view exponential maps as a sequence of functions. Given an exponential map $\phi:A\longrightarrow A[U]$, let us define functions $\phi^n: A\longrightarrow A$ for each $n\geq 0$, given by $\phi^n(a)=$ the coefficient of $U^n$ in the polynomial $\phi(a)$ for all $a\in A$.
			From the definition of exponential maps it follows that this sequence of functions satisfies the following conditions:
			\begin{enumerate}
			\item[\rm(i)] These functions are $K$-linear.
			\item[\rm(ii)] $\phi^0(a)=a$ for all $a\in A$.
			\item[\rm(iii)] For each $a\in A$, there are only finitely many integers $n$ such that $\phi^n(a)$ is non-zero.
			\item[\rm(iv)] (Leibniz Rule) $\phi^n(ab)=\sum_{i+j=n}\phi^i(a)\phi^j(b)$ for all $n\geq 0 $ and for all $a, b\in A$.
			\item[\rm(v)] (Iterative Property) $\phi^i\phi^j=\binom{i+j}{i} \phi^{i+j}$
		\end{enumerate}
		\indent
		
	We call the collection $D=\{\phi^0,\phi^1,\phi^2,...\}$ the locally finite iterative higher derivation associated with the exponential map $\phi$.

			\begin{Defn}
			\rm	For an affine $k$-domain $A$ the Makar-Limanov invariant is defined by
				$${\rm ML}(A):=\bigcap_{\phi \in exp(A)} A^{\phi}$$
				and the Derksen invariant, denoted by ${\rm DK}(A)$, is defined to be the subalgebra generated by the ring of invariants of all the nontrivial exponential maps on $A$.
			\end{Defn}
			\medskip
			Now we record some standard results about exponential maps which can be found in \cite{rig_small_domain} and \cite{ng_inventions}.
			\begin{lem}\label{l2.4}
				Let $A$ be an affine domain over a field $K$ of characteristic $p\geq 0$. Suppose $\phi: A\longrightarrow A[U]$ is a nontrivial exponential map on $A$. 
				Then the following hold
				\begin{enumerate}
				\item[{\rm(i)}] $ A^{\phi}$ is inert in $A$, i.e, $ab\in A^{\phi}\implies a, b \in A^{\phi} $ for all non-zero $a, b\in A$. In particular, $ A^{\phi}$ is algebraically closed in $A$.
				\item[{\rm(ii)}] If $a\in A$, then $\rm{deg}$$_{\phi}(\phi^i(a))\leq {\rm deg}_{\phi}(a)-i$ for all $i\geq 0$. In particular, if ${\rm deg}_{\phi}(a)=n \geq 0$, then $\phi^n(a)\in A^{\phi}$.  
				\item[{\rm(iii)}] Let $x\in A$ be an element of least positive $\phi$-degree, say $n={\rm deg}_{\phi}(x)$, and let $c$ be the coefficient of $U^n$ in $\phi(x)$ then

				      \item[{\rm(a)}] $\phi^i(x)\in A^{\phi}$ for all $0\leq i\leq n$.
				   \item[{\rm(b)}] $\phi^i(x)=0$ $\forall$ $i>1 $ that is not a power of $p$. 
				   \item[{\rm(c)}] $n \mid{\rm deg}_{\phi}(a)$  $ \forall$ non-zero element $a\in A$, \item[{\rm(d)}]  $A[c^{-1}]=A^{\phi}[c^{-1}][x]$.
				\item[{\rm(iv)}] ${\rm trdeg}_K(A^{\phi})={ \rm trdeg}_K(A)-1$.
				\item[{\rm(v)}] If ${\rm trdeg}_K(A)=1$ and $\Tilde{K}$ is the algebraic closure of $K$ in $A$, then $A=\Tilde{K}^{[1]}$ and $A^{\phi}=\Tilde{K}$
				\item[{\rm (vi)}]Let $S$ be any multiplicative subset of $A^{\phi}\setminus{0}$, then $\phi$ can be extended to an exponential map $S^{-1}\phi$ on $S^{-1}A$ by defining $S^{-1}\phi(a/s):=\phi(a)/s$ for all $a\in A,s\in S$. Further the ring of invariants of  $S^{-1}\phi$ is $S^{-1}(A^{\phi})$.
				\end{enumerate}
			\end{lem}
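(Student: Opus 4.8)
The plan is to deduce all six items from two elementary observations, together with the higher-derivation identities (i)--(v) recorded just above. The first observation is that $\phi$ is injective, being a section of $\epsilon_0$, so $\deg_\phi(a)\in\pZ$ for every nonzero $a\in A$; the second is that $a\in\Ap$ if and only if $\deg_\phi(a)\le 0$. Everything then reduces to short manipulations of the degree function $\deg_\phi$ and the iterative identity.

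For (i): since $\deg_\phi$ is a degree function, $\deg_\phi(ab)=\deg_\phi(a)+\deg_\phi(b)$, so if $ab\in\Ap$ with $a,b$ nonzero the sum is $\le 0$ while both summands lie in $\pZ$, forcing $\deg_\phi(a)=\deg_\phi(b)=0$. Algebraic closedness then follows formally: given a relation $\sum_{i=0}^{m}c_ia^i=0$ of $a\in A\setminus\{0\}$ over $\Ap$ with $m$ minimal and $c_m\ne 0$, either $c_0\ne 0$ and factoring $a$ out of $c_0\in\Ap$ puts $a\in\Ap$ by inertness, or $c_0=0$ and cancelling $a$ contradicts minimality. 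For (ii): comparing coefficients of $U^i$ in the identity $\phi_V(\phi_U(a))=\phi_{U+V}(a)$ yields $\phi_V(\phi^i(a))=\sum_{n\ge i}\binom{n}{i}\phi^n(a)V^{n-i}$; reading off $V$-degrees gives $\deg_\phi(\phi^i(a))\le\deg_\phi(a)-i$, and taking $i=\deg_\phi(a)$ gives $\phi^{\deg_\phi(a)}(a)\in\Ap$.

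For (iii), fix $x$ of least positive $\phi$-degree $n$ and set $c=\phi^n(x)$. By (ii), $\deg_\phi(\phi^i(x))\le n-i<n$ for $i\ge 1$, so by minimality of $n$ this degree is $\le 0$ and $\phi^i(x)\in\Ap$ for $1\le i\le n$ (while $\phi^i(x)=0$ for $i>n$), which is (a). For (b), if $m\ge 2$ is not a power of $p$, Lucas' theorem supplies some $0<j<m$ with $p\nmid\binom{m}{j}$; the iterative property gives $\phi^{m-j}\phi^{j}(x)=\binom{m}{j}\phi^{m}(x)$, and the left-hand side vanishes because $\phi^j(x)\in\Ap$ and $m-j\ge 1$, forcing $\phi^m(x)=0$. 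The substantial step is (d), the local slice theorem $A[c^{-1}]=\Ap[c^{-1}][x]$: after inverting the leading coefficient $c$ one reduces the $\phi$-degree of an arbitrary element by successively subtracting $\Ap[c^{-1}]$-multiples of powers of $x$, using that $c$ is now a unit, and one checks along the way that $x$ is transcendental over $\Ap[c^{-1}]$ (otherwise $A$ would be algebraic over $\Ap$ and hence equal to $\Ap$ by (i), against nontriviality); I would invoke this from \cite{rig_small_domain} rather than reprove it. Granting (d), part (c) follows at once: on the polynomial ring $A[c^{-1}]=\Ap[c^{-1}]^{[1]}$ we have $\phi(x)=x+\phi^1(x)U+\cdots+cU^n$, so for $P=\sum_j b_jx^j$ with $b_j\in\Ap[c^{-1}]$ the top $U$-coefficient of $\phi(P)$ is $b_Dc^D$ with $D=\max\{j:b_j\ne 0\}$, which is nonzero because $c$ is a unit in a domain; hence $\deg_\phi(P)=nD$ and $n$ divides every $\phi$-degree.

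Item (iv) is then immediate, since localisation preserves transcendence degree: $\td_K(A)=\td_K(A[c^{-1}])=\td_K(\Ap[c^{-1}])+1=\td_K(\Ap)+1$ using (d) and the transcendence of $x$. For (v), $\td_K(A)=1$ forces $\td_K(\Ap)=0$, so $\Ap$ is algebraic over $K$ and, being algebraically closed in $A$ by (i), equals the relative algebraic closure $\tilde K$ of $K$ in $A$; then $c\in\tilde K^{\times}$, so $A=A[c^{-1}]=\tilde K[x]$, and $x$ is transcendental over $\tilde K$, giving $A=\tilde K^{[1]}$ and $\Ap=\tilde K$. For (vi), one checks directly that $S^{-1}\phi$ is well defined (apply $\phi$ to a relation $s''(as'-a's)=0$ and use $S\subseteq\Ap$) and satisfies the two axioms; and $a/s$ is fixed precisely when $ts(\phi(a)-a)=0$ in $A[U]$ for some $t\in S$, which forces $\phi(a)=a$ since $A[U]$ is a domain and $ts\ne 0$, so the ring of invariants is $S^{-1}(\Ap)$. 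I expect (iii)(d)---and with it the portions of (c) and (iv) that depend on it---to be the one genuine obstacle, as the local slice theorem rests on the associated-graded/filtration argument which must be carried out over a field of arbitrary characteristic; all the other items are routine bookkeeping with the degree function and the iterative identity.
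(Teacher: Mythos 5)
Your proposal is correct, but be aware that the paper contains no proof of this lemma to compare against: it is recorded as a collection of standard facts with the reader referred to \cite{rig_small_domain} and \cite{ng_inventions}. Your reconstruction matches the standard arguments in those sources: inertness of $A^{\phi}$ via the degree function and the minimal-relation trick for algebraic closedness, the coefficient comparison in $\phi_V\circ\phi_U=\phi_{U+V}$ for (ii), minimality of $n$ together with the iterative identity and Lucas' theorem for (iii)(a),(b), and the local slice theorem (iii)(d), which you -- exactly as the paper does -- import from \cite{rig_small_domain} rather than reprove. Deducing (c) from (d) by reading off the top $U$-coefficient $b_Dc^D$ of $\phi(P)$ for $P=\sum_j b_jx^j\in A^{\phi}[c^{-1}][x]$ is a legitimate and slightly cleaner reorganization than the usual treatment, where (c) is proved before or alongside (d); your arguments for (iv)--(vi) are the standard ones and are sound. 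One point worth flagging: as printed, (iii)(a) asserts $\phi^i(x)\in A^{\phi}$ for $0\le i\le n$, which fails at $i=0$ since $\phi^0(x)=x\notin A^{\phi}$; your range $1\le i\le n$ is the correct statement, so you have silently repaired a typo in the paper rather than introduced a gap.
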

			\smallskip
			
			\begin{Defn}
				\rm A collection of $K$-linear subspaces $\{A_n\}_{n\in \mathbb{Z}}$ of an affine $K$-domain $A$ is called a proper $\mathbb{Z}$-filtration if it satisfies the following conditions:
				\begin{enumerate}
				\item[(i)] $A_n\subseteq A_{n+1}$ for all n in $\mathbb{Z}$,
				\item[(ii)] $A=\bigcup_{n\in \mathbb{Z}} A_n$,
				\item[(iii)] $\bigcap_{n\in \mathbb{Z}} A_n={0}$ and 
				\item[(iv)] $(A_n\setminus A_{n-1}). (A_m\setminus A_{m-1})\subseteq A_{n+m}\setminus A_{n+m-1}$ for all $n, m\in\mathbb{Z}$.
				\end{enumerate}
			\end{Defn}
			\smallskip
			A proper $\mathbb{Z}$-filtration $\{A_n\}_{n\in \mathbb{Z}}$ of $A$ is called admissible if there exists a finite generating set $\Gamma$ of $A$ such that, for any $n\in \mathbb{Z}$ and $a\in A_n$, $a$ can be written as a finite sum of monomials in elements of $\Gamma$ and each of these monomials is an element of $A_n$.
			\\ Given a proper $\mathbb{Z}$-filtration $\{A_n\}_{n\in \mathbb{Z}}$ of an affine $K$-domain $A$, we have an associated $\mathbb{Z}$-graded integral domain $$gr(A):=\bigoplus_{i\in \mathbb{Z}}\frac{A_i}{A_{i-1}}$$
			and a map $\rho: A \longrightarrow gr(A)$ defined by $\rho(a)= a+ A_{n-1}$, if $a\in A_n\setminus A_{n-1}$. 
			Below we note a few remarks.
			\smallskip
			\begin{rem}
			\rm
			\begin{enumerate}	
				\item[(i)] The map $\rho$ is multiplicative but not additive in general.
				\smallskip
				\item[(ii)] If $\{A_n\}_{n\in \mathbb{Z}}$ is a proper admissible $\mathbb{Z}$-filtration on $A$ with a finite generating set $\Gamma$, then $gr(A)$ is generated by $\rho(\Gamma)$.
				\smallskip
				\item[(iii)] If $A=\bigoplus_{n\in \mathbb{Z}}A_n$ is a $\mathbb{Z}$-graded affine $K$-domain then the filtration $\{B_i\}_{i\in \mathbb{Z}}$ defined by $B_n:=\bigoplus_{i\leq n}A_n$ is a proper admissible $\mathbb{Z}$-filtration on $A$, and the associated graded domain $gr(A)=\bigoplus_{i\in \mathbb{Z}}\frac{B_i}{B_{i-1}}\cong \bigoplus_{i\in \mathbb{Z}}A_i\cong A$ with this identification, the image $\rho(a)$ of a non-zero element $a\in A$, under the map $\rho: A \longrightarrow gr(A)\cong A$,  is the highest degree homogeneous component of $a$.
				\end{enumerate}
				
			\end{rem}
			\smallskip
			\begin{Defn}
			\rm	An exponential map $\phi$ on a graded $K$-domain $A$ is said to be homogeneous if $\phi: A\longrightarrow A[U]$ is a homogeneous ring homomorphism of graded rings where $A[U]$ has a grading induced from the grading of $A$ such that $U$ becomes a homogeneous element. 
			\end{Defn}
			
			The following version of a result on homogenization of exponential maps due to H. Derksen, O. Hadas and L. Makar-Limanov \cite{hadas-derksen} is in (\cite{craciolla_russelkorus}, Theorem 2.6)  .
			\begin{thm}\label{hadas-derk}
				\smallskip
				Let A be an affine $K$-domain with an admissible proper $\mathbb{Z}$-filtration and $gr(A)$ be the associated domain. Let $\phi$ be a non-trivial exponential map on $A$. Then $\phi$ induces a non-trivial homogeneous exponential map $\overline{\phi}$ on $gr(A)$ such that $\rho(A^{\phi})\subseteq gr(A)^{\overline{\phi}}$.
			\end{thm}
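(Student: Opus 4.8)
The plan is to reproduce, in the present filtered setting, the homogenisation argument of Derksen--Hadas--Makar-Limanov. Let $v\colon A\to\mathbb{Z}\cup\{-\infty\}$ be the degree function attached to the filtration: $v(a)=n$ iff $a\in A_n\setminus A_{n-1}$, and $v(0)=-\infty$; by axiom (iv) of a proper $\mathbb{Z}$-filtration $v$ is multiplicative and $gr(A)$ is the associated graded domain, with $\rho(a)$ the leading form of $a$. The whole point is to equip the polynomial variable $U$ with the right weight. Let $\Gamma$ be the finite generating set witnessing admissibility and put
\[
w_0:=\min\left\{\frac{v(\gamma)-v(\phi^n(\gamma))}{n}\ :\ \gamma\in\Gamma,\ n\ge 1,\ \phi^n(\gamma)\neq 0\right\}.
\]
Since $\phi$ is non-trivial this set is non-empty, and being finite (each $\phi(\gamma)$ is a polynomial in $U$) the minimum is attained, say at a pair $(\gamma^\ast,n^\ast)$, and $w_0\in\mathbb{Q}$. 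Rescaling the grading of $gr(A)$ by the common denominator (which changes neither $gr(A)$ as a ring nor the notion of a homogeneous exponential map) we may and do assume $w_0\in\mathbb{Z}$; set $w:=w_0$.

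Next I would filter $A[U]$ by $A[U]_n:=\sum_{j\ge 0}A_{n-jw}U^j$. With generating set $\Gamma\cup\{U\}$ this is again a proper admissible $\mathbb{Z}$-filtration, and $gr(A[U])=gr(A)[U]$ with $U$ homogeneous of degree $w$, compatibly with the leading-form maps. The choice $w=w_0$ gives $v(\phi^n(\gamma))+nw\le v(\gamma)$ for every $\gamma\in\Gamma$ and every $n\ge 1$; using the Leibniz rule $\phi^n(ab)=\sum_{i+j=n}\phi^i(a)\phi^j(b)$, multiplicativity of $v$, and admissibility, this inequality propagates from $\Gamma$ to all of $A$, so $\phi(A_n)\subseteq A[U]_n$ for all $n$. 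Thus $\phi$ is a filtered $K$-algebra homomorphism.

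A filtered homomorphism induces a graded homomorphism on associated graded rings, so $\phi$ induces $\overline{\phi}:=gr(\phi)\colon gr(A)\to gr(A)[U]=gr(A[U])$, a graded $K$-algebra homomorphism which sends $\rho(a)$ to the leading form of $\phi(a)$ (the $U^0$-term of $\phi(a)$ is $a$, so no degree is lost in passing to $gr$). Since the evaluation $\epsilon_0$ and the substitution maps $\phi_V$ are themselves filtered once $U$ and $V$ are assigned weight $w$, applying the functor $gr$ to the identities $\epsilon_0\circ\phi_U=\mathrm{id}_A$ and $\phi_V\circ\phi_U=\phi_{V+U}$ yields the corresponding identities for $\overline{\phi}$; hence $\overline{\phi}$ is an exponential map, and it is homogeneous by construction. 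It is non-trivial: at $(\gamma^\ast,n^\ast)$ the monomial $\rho(\phi^{n^\ast}(\gamma^\ast))\,U^{n^\ast}$ occurs in the leading (degree $v(\gamma^\ast)$) part of $\phi(\gamma^\ast)$ and is non-zero in $gr(A)[U]$ --- distinct powers of $U$ cannot cancel, and $gr(A)$ is a domain --- so $\overline{\phi}(\rho(\gamma^\ast))\neq\rho(\gamma^\ast)$. Finally, if $a\in A^{\phi}$ then $\phi(a)=a$, whose leading form is just $\rho(a)$, so $\overline{\phi}(\rho(a))=\rho(a)$ for every $a\in A^{\phi}$, which is precisely the assertion $\rho(A^{\phi})\subseteq gr(A)^{\overline{\phi}}$.

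The main obstacle is the construction of the weight $w$ together with the propagation step: without the admissibility hypothesis the infimum defining the weight of $U$ could be $-\infty$, and it is exactly the finiteness of $\Gamma$ --- plus multiplicativity of $v$ and the Leibniz rule --- that both pins $w_0$ down as a rational number with the minimum attained and reduces the verification that $\phi$ is filtered to the finitely many generators. Everything afterward, namely the passage $\phi\rightsquigarrow gr(\phi)$ and the transfer of the exponential-map axioms, is formal functoriality of the associated-graded construction applied to filtered maps; there the domain property of $gr(A)$ is what guarantees $gr(A[U])=gr(A)[U]$ and that leading forms of products are products of leading forms, keeping the bookkeeping consistent.
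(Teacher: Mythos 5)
The paper itself gives no proof of this statement---it is quoted as a known result of Derksen--Hadas--Makar-Limanov in the form presented in Crachiola's Theorem 2.6---and your argument is essentially that standard homogenization proof: choose the weight of $U$ from the finitely many generators in $\Gamma$ so that $\phi\colon A\to A[U]$ becomes a filtered map, pass to associated graded rings, and read off homogeneity, non-triviality at the minimizing pair $(\gamma^\ast,n^\ast)$, and the inclusion $\rho(A^{\phi})\subseteq gr(A)^{\overline{\phi}}$. Your write-up is correct, including the two delicate points: rescaling the filtration to make the weight $w_0$ an integer (which leaves $gr(A)$, $\rho$ and the notion of homogeneous exponential map unchanged), and using admissibility together with the Leibniz rule and multiplicativity of $v$ to propagate the inequality $v(\phi^n(a))+nw\le v(a)$ from $\Gamma$ to all of $A$.
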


			\begin{Defn}
				\rm
				Over a field $F$, an element $g\in F[X,Y]$ is said to be a
				\begin{enumerate}
					\smallskip
					\item[(i)]coordinate if $F[X,Y]= F[g]^{[1]}$,
					\smallskip
					\item[(ii)]line if $\frac{F[X,Y]}{(g)}\cong F^{[1]}$,
					\smallskip
					\item[(iii)]a non-trivial line if it is a line but not a coordinate.
				\end{enumerate} 
			
			\end{Defn}
			Now we recall a consequence of the Epimorphism Theorem of S. S Abhyankar and T. T Moh, (\cite{abh_moh}), as presented in (\cite{abh_moh_tifr}, Corollary 9.26, and Remark 9.29).
			\begin{thm}\label{abhmoh}
				\smallskip
				Let $K$ be a field of characteristic $p\geq0$. Let $h(X,Y)\in K[X,Y]$ be such that $\frac{K[X,Y]}{(h)}\cong_{K}K^{[1]}$ and $p\nmid {\rm gcd}({\rm deg}_X(h),{\rm deg}_Y(h))$. Then ${\rm deg}_X(h)\mid {\rm deg}_Y(h)$ or ${\rm deg}_Y(h)\mid {\rm deg}_X(h)$.
				
			\end{thm}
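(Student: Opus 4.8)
The statement is a reformulation of the Abhyankar--Moh Epimorphism Theorem, so the plan is to translate the hypothesis on $h$ into the parametric setting in which that theorem is usually phrased and then invoke it. First I would use the isomorphism $K[X,Y]/(h)\cong_{K}K^{[1]}=K[T]$ to produce a polynomial parametrisation: let $x(T),y(T)\in K[T]$ be the images of $X$ and $Y$, so that $K[x(T),y(T)]=K[T]$ and $(h)=\ker\big(K[X,Y]\twoheadrightarrow K[T]\big)$. If $x(T)$ is a constant $c$, then $X-c$ lies in this kernel; as $h$ and $X-c$ are both prime in $K[X,Y]$ and generate the same ideal, $h$ is a scalar multiple of $X-c$, so $\deg_X(h)=1$, $\deg_Y(h)=0$, and the conclusion holds since $1\mid 0$. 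The case where $y(T)$ is constant is symmetric, and $x(T),y(T)$ cannot both be constant; so from now on I would assume both are non-constant. Then $h$ must involve both variables --- otherwise, being prime, it would be an irreducible polynomial in a single variable, hence (comparing the algebraic closure of $K$ in the residue ring with that in $K[T]$) linear in that variable, which would force the other variable to map to a constant.

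The next step is to identify the two partial degrees of $h$ with the $T$-degrees of the parametrisation. Localising the surjection $K[X,Y]\twoheadrightarrow K[T]$ at $K[X]\setminus\{0\}$ yields $K(X)[Y]/(h)\cong K(x(T))\otimes_{K[x(T)]}K[T]$. Writing $h=\sum_i u_i(X)Y^i$ with $u_e\ne 0$ and $e=\deg_Y(h)\ge 1$, in $K(X)[Y]$ the element $h$ equals $u_e$ times a monic polynomial of degree $e$, so the left-hand side has dimension $\deg_Y(h)$ over $K(X)$. On the right, since $x(T)$ is non-constant, $K[T]$ is a finite free $K[x(T)]$-module of rank $[K(T):K(x(T))]=\deg_T(x(T))$, so the tensor product has dimension $\deg_T(x(T))$ over $K(x(T))=K(X)$. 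Hence $\deg_Y(h)=\deg_T(x(T))$, and by the same argument with $X$ and $Y$ interchanged, $\deg_X(h)=\deg_T(y(T))$.

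Finally, the hypothesis $p\nmid\gcd(\deg_X(h),\deg_Y(h))$ becomes $p\nmid\gcd(\deg_T(x(T)),\deg_T(y(T)))$, which is exactly the condition under which the Abhyankar--Moh theorem remains valid in characteristic $p$. Applying it to $x(T),y(T)\in K[T]$ with $K[x(T),y(T)]=K[T]$ gives $\deg_T(x(T))\mid\deg_T(y(T))$ or $\deg_T(y(T))\mid\deg_T(x(T))$, i.e.\ $\deg_Y(h)\mid\deg_X(h)$ or $\deg_X(h)\mid\deg_Y(h)$, which is the assertion. The one genuinely deep ingredient is the parametric Abhyankar--Moh theorem itself, which I would quote rather than reprove: its proof passes to the closure of the parametrised curve in $\mathbb{P}^2$, exploits that this curve has a single place at infinity because the affine curve is an embedded $\mathbb{A}^1$, and then forces the semigroup of that place --- computed via the theory of approximate roots, which is where the characteristic-$p$ hypothesis enters --- to be generated by a divisibility chain, yielding the divisibility of $\deg_X(h)$ and $\deg_Y(h)$. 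This is where I expect the real difficulty to lie; everything before it is routine commutative algebra.
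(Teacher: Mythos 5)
The paper offers no proof of this statement at all: it is quoted as a known consequence of the Abhyankar--Moh Epimorphism Theorem, as presented in Abhyankar's TIFR lectures (Corollary 9.26 and Remark 9.29), where it already appears in this partial-degree form. Your argument is correct and amounts to the same thing: the translation to the parametric setting (the degenerate constant cases, and the identifications $\deg_Y(h)=\deg_T(x(T))$, $\deg_X(h)=\deg_T(y(T))$ via localization at $K[X]\setminus\{0\}$) is sound routine algebra, and the deep ingredient you quote is precisely the cited Abhyankar--Moh theorem, so your route is essentially the paper's citation with the standard dictionary between the two formulations made explicit.
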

			For the convenience of the reader, we record below a few elementary observations.
			
			\begin{lem}\label{2.11}
				Let $E$ and $F$ be integral domains such that $E\subset F$. If there exists an $a\in E$ such that $E[a^{-1}]=F[a^{-1}]$ and $aF\cap E = aE$ then $E=F$.
			\end{lem}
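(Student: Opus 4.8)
Since $E \subseteq F$, it is enough to prove the reverse inclusion $F \subseteq E$. Fix an arbitrary element $f \in F$. Because $E[a^{-1}] = F[a^{-1}]$ (note that the hypothesis forces $a \neq 0$, else $a^{-1}$ is meaningless), we may write $f = e/a^{n}$ for some $e \in E$ and some integer $n \geq 0$; equivalently $a^{n} f = e \in E$. I would choose $n$ to be the least non-negative integer for which such a representation exists, and then show $n = 0$, which gives $f = e \in E$.

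The key step is to rule out $n \geq 1$ by a one-step descent using the hypothesis $aF \cap E = aE$. Suppose $n \geq 1$. Since $a \in E \subseteq F$ and $f \in F$, the element $a^{n-1} f$ lies in $F$, so $e = a^{n} f = a\,(a^{n-1} f) \in aF$. As also $e \in E$, we get $e \in aF \cap E = aE$, say $e = a e'$ with $e' \in E$. Then $a^{n} f = a e'$ in the domain $F$, and cancelling the non-zero element $a$ yields $a^{n-1} f = e' \in E$, i.e.\ $f = e'/a^{n-1}$. This contradicts the minimality of $n$ (or, phrased as an induction on $n$, reduces to the case $n-1$, whose base case $n = 0$ is trivial). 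Hence $n = 0$ and $f \in E$, completing the proof.

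I do not expect any genuine obstacle here: the argument is purely formal manipulation in integral domains. The only points requiring a word of care are that $a \neq 0$ (so that localisation at $a$ and cancellation are legitimate) and that one should cancel $a$ in $F$ rather than in $E$, since a priori $E$ need not be closed under the relevant division — but $aF \cap E = aE$ is exactly what transfers the divisibility back into $E$ at each step.
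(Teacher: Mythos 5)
Your proof is correct and complete: the minimal-exponent descent using $aF\cap E=aE$ to pull $a^{n}f\in E$ down to $a^{n-1}f\in E$, with cancellation of the nonzero $a$ performed in the domain $F$, is exactly the argument this lemma calls for, and your remark that the hypotheses implicitly force $a\neq 0$ is the right point of care. The paper records this lemma as an elementary observation without giving a proof, so there is nothing to compare against; your write-up supplies precisely the standard argument one would expect.
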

			
			\begin{lem}\label{l1}
				Let $R$ be a unique factorization domain and $a,b\in R$ be such that ${\rm gcd}(a,b)=1$. Then $\frac{R[Y]}{(aY-b)}$ is an integral domain.
				
			\end{lem}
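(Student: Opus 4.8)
\emph{Proof idea.} The plan is to show that $aY-b$ generates a prime ideal of $R[Y]$, which is precisely the assertion that $R[Y]/(aY-b)$ is a domain. First I would dispose of the degenerate case $a=0$: then $\gcd(0,b)=1$ forces $b$ to be a unit, so $(aY-b)=R[Y]$ and there is nothing to prove. Hence assume $a\neq 0$; then $aY-b$ has degree $1$ in $Y$, is nonzero, and, being of positive $Y$-degree over the domain $R$, is not a unit of $R[Y]$.

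Since $R$ is a UFD, so is $R[Y]$ by Gauss's Lemma, and in a UFD a nonzero nonunit is prime as soon as it is irreducible; thus it is enough to prove that $aY-b$ is irreducible in $R[Y]$. For this I would argue directly from a putative factorization $aY-b=gh$: comparing $Y$-degrees (which add, as $R$ is a domain) forces one factor, say $h$, to be a constant $c\in R$, and then $c$ divides both the leading coefficient $a$ and the constant term $-b$, so $c\mid\gcd(a,b)=1$ and $c$ is a unit, i.e.\ the factorization is trivial. Equivalently, $aY-b$ is a primitive polynomial whose image in $\operatorname{Frac}(R)[Y]$ is irreducible (being linear), so it is irreducible in $R[Y]$ by the usual form of Gauss's Lemma.

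I do not expect a genuine obstacle here; the only points needing care are the convention in the excluded case $a=0$ and pinning down exactly where the coprimality of $a$ and $b$ enters (namely, to force a common constant divisor of $a$ and $-b$ to be a unit). Should one prefer to avoid invoking factoriality of $R[Y]$, an alternative is to localise: $R[Y][a^{-1}]/(aY-b)\cong R[a^{-1}]$ is a domain, and $a$ remains a nonzerodivisor modulo $(aY-b)$ because in the UFD $R$, if $a\mid bc$ and $\gcd(a,b)=1$ then $a\mid c$; together these exhibit $R[Y]/(aY-b)$ as a subring of a domain.
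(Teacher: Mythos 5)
Your argument is correct, and in fact the paper itself records Lemma \ref{l1} only as an elementary observation, without supplying a proof, so there is no written argument to compare against; yours fills that gap in a standard way. The core of your proof is sound: for $a\neq 0$ the polynomial $aY-b$ is a nonzero nonunit of the UFD $R[Y]$ (Gauss), any factorization must have a constant factor $c$ which then divides both $a$ and $-b$, hence divides $\gcd(a,b)=1$ and is a unit; so $aY-b$ is irreducible, hence prime in the UFD $R[Y]$, and the quotient is a domain. The alternative localization argument is also fine, provided you spell out that ``$a$ is a nonzerodivisor modulo $(aY-b)$'' means: if $aF=(aY-b)G$ in $R[Y]$, then reducing modulo $a$ gives $a\mid bG$ coefficientwise, so coprimality in the UFD $R$ yields $a\mid G$, and cancelling $a$ gives $(aY-b)\mid F$; this coefficientwise step is the only place where your sketch compresses a detail. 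The one genuine (if harmless) slip is the degenerate case $a=0$: there $b$ is a unit, $(aY-b)=R[Y]$, and the quotient is the zero ring, which under the usual convention is \emph{not} an integral domain, so ``there is nothing to prove'' is not accurate -- strictly speaking the lemma should be read with $a\neq 0$ (as it is in the paper's application, where $a=f(X)$ is monic of degree $r\geq 2$), or one should exclude that boundary case explicitly rather than claim it is trivially true.
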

			
		\begin{lem}\label{l3}
				Let $B$ be an integral domain and $a,b,c\in B\setminus\{0\}$. If $R=\frac{B[Y]}{(acY-b)}$, then $aR \cap B= (a,b)B$.  
			\end{lem}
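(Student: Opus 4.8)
The plan is to prove the two inclusions $(a,b)B\subseteq aR\cap B$ and $aR\cap B\subseteq (a,b)B$ separately, using only the presentation $R=B[Y]/(acY-b)$ and the polynomial identities it yields; no deeper input (not even Lemma~\ref{l1} or unique factorization) is needed. First I would record that the structure map $B\to R$ is injective: if $\beta\in B$ maps to $0$ in $R$, then $\beta=(acY-b)h(Y)$ in the domain $B[Y]$, and since $ac\neq 0$, comparing $Y$-degrees forces $h=0$, hence $\beta=0$. So we may regard $B\subseteq R$, and then $aR\cap B$ is exactly the contraction of the ideal $aR$ of $R$, in particular it is an ideal of $B$. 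To get $(a,b)B\subseteq aR\cap B$ it therefore suffices to check that $a,b\in aR\cap B$: indeed $a=a\cdot 1\in aR$, while if $y$ denotes the image of $Y$ in $R$ the defining relation gives $b=acy=a(cy)\in aR$, and both $a,b\in B$.

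For the reverse inclusion, take $\beta\in aR\cap B$. Lifting $\beta=a\,\overline{g(Y)}$ from $R$ to $B[Y]$, there exist $g,h\in B[Y]$ with the polynomial identity
\[
\beta - a\,g(Y) = (acY-b)\,h(Y)
\]
in $B[Y]$. Now compare the constant terms (coefficients of $Y^0$) on both sides: the left side contributes $\beta - a\,g(0)$, and since $(acY-b)h(Y)=acY\,h(Y)-b\,h(Y)$, the term $acY\,h(Y)$ contributes nothing to the constant coefficient and $-b\,h(Y)$ contributes $-b\,h(0)$. Hence $\beta = a\,g(0) - b\,h(0)\in aB+bB=(a,b)B$, which finishes the argument.

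I do not expect any genuine obstacle here: the whole proof amounts to extracting the $Y^0$-coefficient of one polynomial identity. The only points that need a word of care are the legitimacy of identifying $B$ with a subring of $R$ (handled by the degree argument above) and the fact that a membership $\beta\in aR$ lifts to an honest equality in $B[Y]$ (immediate from the definition of the quotient ring). Everything else is routine.
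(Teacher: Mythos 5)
Your proof is correct: the constant-coefficient comparison in the identity $\beta - a\,g(Y)=(acY-b)h(Y)$ gives exactly the inclusion $aR\cap B\subseteq (a,b)B$, and the easy inclusion and the injectivity of $B\to R$ are handled properly. The paper records Lemma~\ref{l3} among its elementary observations without supplying a proof, and your argument is precisely the routine one that is intended there, so nothing further is needed.
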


			\section{Main Results}
			\indent
			
				For convenience we record a few elementrary observations. The results in Lemma \ref*{l3.1} below have been stated in  (\cite{veloso}, Proposition 4) over fields of characteristic zero.
			\begin{lem}\label{l3.1}
				\medskip
				Let $K$ be any field of arbitrary characteristic. Let $A=\frac{K[X,Y,Z]}{(f(X)Y-\varphi(X,Z) )}$ where $f=X^r+a_{r-1}X^{r-1}+...+a_1X+a_0$ is a monic polynomial in $X$ and $\varphi(X,Z)=Z^d+c_{d-1}(X)Z^{d-1}+...c_1(X)Z+c_0(X)$ is such that $d\geq2$, $r\geq2$. Let $x,y,z$ denote respectively images of $X,Y,Z$ in $A$. Then the following hold:
				\begin{enumerate}
					\item[{\rm(i)}] A is an integral domain and $\rm{trdeg}\it_K (A)=2$.
					\smallskip
					\item[{\rm(ii)}] $K[x]$
					is inert in $A$ and $K(x)\cap A=K[x]$.
					\smallskip
					\item[{\rm(iii)}] If $S=K[x]\setminus \{0\}$, then $S^{-1}A=K(x)[z]$.
					\smallskip
					\item[{\rm(iv)}] Any $g\in A$ can be expressed uniquely as
					\begin{equation}
						g=g_0(x,z)+g_1(x,z)y+...+g_m(x,z)y^m
					\end{equation} 
					 for some polynomials $g_i$'s in $K[X,Z]$ such that $0\leq {\rm deg}_Z(g_i(X,Z))\leq d-1$ for all $i$,  $0\leq i\leq m$ and $m\geq 0$.
				\end{enumerate}   
			\end{lem}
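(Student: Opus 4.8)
The plan is to prove (i) and (iv) first and to deduce (iii) and (ii) from them, since the normal form in (iv) is the workhorse for the rest.

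\emph{Part (i).} Because $\varphi$ is monic in $Z$, any common divisor of $f(X)$ and $\varphi(X,Z)$ in the UFD $K[X,Z]$ must lie in $K[X]$ (compare $Z$-degrees in a factorisation of $f$) and then divide the leading $Z$-coefficient $1$ of $\varphi$, hence be a unit; thus $\gcd(f,\varphi)=1$ and Lemma~\ref{l1} shows $A$ is an integral domain. A nonzero element of the ideal $(f(X)Y-\varphi(X,Z))$ has positive $Y$-degree (its $Y$-degree is $1+\deg_Y g$ for the cofactor $g$), so $K[X,Z]\hookrightarrow A$; hence $x,z$ are algebraically independent over $K$, $f(x)\neq 0$ in $A$, and $y=\varphi(x,z)/f(x)$ lies in $K(x,z)$, so $\operatorname{Frac}(A)=K(x,z)$ and $\operatorname{trdeg}_K A=2$.

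\emph{Part (iv).} For existence I would take $A'$ to be the $K$-span of the monomials $x^a z^b y^c$ with $0\le b\le d-1$ and show $A'=A$. Since $A'$ is visibly closed under multiplication by $x$ and by $y$ and contains $1,x,y,z$ (using $d\ge 2$ for $z\in A'$), it suffices to see that $z^e\in A'$ for every $e\ge 0$; this follows by induction on $e$ from the relation $z^d=f(x)y-\sum_{i=0}^{d-1}c_i(x)z^i$ in $A$, writing $z^e=z^{e-d}z^d$ and expanding to strictly lower $z$-exponents for $e>d$. For uniqueness, suppose $\sum_{i=0}^m g_i(x,z)y^i=0$ in $A$ with $\deg_Z g_i\le d-1$; lifting, write $\sum_i g_i(X,Z)Y^i=(f(X)Y-\varphi(X,Z))\sum_j q_j(X,Z)Y^j$ in $K[X,Y,Z]$. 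The constant term in $Y$ forces $g_0=-\varphi q_0$, impossible unless $q_0=0$ because $\deg_Z(\varphi q_0)\ge d>\deg_Z g_0$; an induction on the power of $Y$ then forces every $q_j=0$, hence every $g_i=0$.

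\emph{Parts (iii) and (ii).} Localising at $S=K[x]\setminus\{0\}$ turns $f(x)$ into a unit, so $S^{-1}A=K(x)[Y,Z]/(f(x)Y-\varphi(x,Z))\cong K(x)[Z]=K(x)[z]$ after eliminating $Y=\varphi(x,z)/f(x)$, and $A\hookrightarrow S^{-1}A$ because $A$ is a domain with $S\subseteq A\setminus\{0\}$; this is (iii). For (ii), take $w\in K(x)\cap A$ and write it in the normal form $\sum_{i=0}^m g_i(x,z)y^i$ of (iv); substituting $y=\varphi(x,z)/f(x)$ inside $\operatorname{Frac}(A)$ gives $f(x)^m w=\sum_{i=0}^m g_i(x,z)\varphi(x,z)^i f(x)^{m-i}\in K[x,z]$, and since $\varphi$ is monic of $Z$-degree $d$ while each $\deg_Z g_i\le d-1$, there is no cancellation of the top $z$-term, so the $Z$-degree of the right side is $\deg_Z g_{i_0}+d\,i_0$ with $i_0$ the largest index with $g_{i_0}\neq0$. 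As $f(x)^m w\in K(x)\cap K[x,z]=K[x]$ has $Z$-degree $0$, we get $i_0=0$ and $\deg_Z g_0=0$, i.e. $w=g_0\in K[x]$, proving $K(x)\cap A=K[x]$. Finally, if $0\neq a,b\in A$ and $ab\in K[x]$, then in $S^{-1}A=K(x)[z]$ we have $\deg_z a+\deg_z b=\deg_z(ab)=0$, so $a,b\in K(x)\cap A=K[x]$; thus $K[x]$ is inert in $A$.

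\emph{Main difficulty.} The delicate point is the normal form (iv): one must pin down both the termination of the $z$-reduction (existence) and the comparison of $Y$-coefficients (uniqueness), because the arguments for (ii) and (iii) rely on having this normal form available and, crucially, on the fact that it controls $Z$-degrees so tightly.
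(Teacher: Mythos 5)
Your proposal is correct, and every step checks out: the $\gcd(f,\varphi)=1$ verification feeding into Lemma~\ref{l1}, the identification $\operatorname{Frac}(A)=K(x,z)$, the reduction of $z$-powers via $z^d=f(x)y-\sum_{i<d}c_i(x)z^i$ for existence of the normal form, the comparison of $Y$-coefficients against a lift $(f(X)Y-\varphi(X,Z))\sum_j q_j Y^j$ for uniqueness, and the $z$-degree argument in $S^{-1}A=K(x)[z]$ for both $K(x)\cap A=K[x]$ and inertness. The main difference from the paper is that the paper gives essentially no argument here: it deduces (i) from Lemma~\ref{l1}, calls (iii) immediate, and for (ii) and (iv) simply cites Proposition~4 of Bianchi--Veloso with the remark that the proof there is characteristic-independent. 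So your write-up is a self-contained substitute for those citations; in particular your derivation of (ii) from the normal form (iv) by tracking $Z$-degrees (no cancellation of the top term $\deg_Z g_{i_0}+d\,i_0$ because $\varphi$ is monic in $Z$ and each $\deg_Z g_i\le d-1$) is exactly the kind of characteristic-free degree bookkeeping the paper implicitly relies on, and it has the small additional benefit of making transparent why the statement needs no hypothesis on $K$ beyond being a field.
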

			\begin{proof}
				
				\smallskip
				(i) Follows from Lemma \ref{l1}.
				\newline
				\smallskip
				(ii) The proof given in (\cite{veloso}, Proposition 4) is independent of the characteristic of $K$.
				\newline
				\smallskip
				(iii) Follows directly from the definition of $A$.
				\newline
				(iv) The same proof given in (\cite{veloso}, Proposition 4) works.
				\end{proof} 
				Moreover we have :
				\smallskip
				\begin{lem}\label{l3.2}
					Under the hypotheses of Lemma 3.1 the following hold:  
						\item[{\rm(i)}]There exists a non-trivial exponential map $\phi$ on $A$ such that $A^{\phi}=K[x]$; in particular, ${\rm ML}(A)\subseteq K[x].$
					\smallskip
					\item[{\rm(ii)}] $A \cap (K(x)\oplus K(x)z)=K[x]\oplus K[x]z$. 
					\end{lem}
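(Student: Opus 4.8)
The plan is to prove the two parts as follows. For part (i), I would exhibit an explicit exponential map on $A$ and then use the general machinery of Lemma \ref{l2.4} together with Lemma \ref{l3.1} to pin down its ring of invariants. Since $f(x)y = \varphi(x,z)$ in $A$, the natural candidate comes from the $\mathbb{G}_a$-action that fixes $x$ and moves $z$ and $y$ compatibly. Concretely, define a $K$-algebra map $\phi\colon A \to A[U]$ by $\phi(x) = x$, $\phi(z) = z + f(x)U$, and $\phi(y) = $ the unique polynomial expression forced by the relation, namely $\phi(y) = \bigl(\varphi(x, z + f(x)U)\bigr)/f(x)$, which lies in $A[U]$ because $\varphi(x, z+f(x)U) - \varphi(x,z)$ is divisible by $f(x)U$ and hence by $f(x)$ in $K[x,z][U]$, and $\varphi(x,z) = f(x)y$ in $A$. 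One checks $\phi$ satisfies the two axioms of an exponential map (the evaluation at $U=0$ is the identity, and the cocycle/additivity condition holds because the substitution $z \mapsto z + f(x)U$ composes additively in the shift parameter). It is non-trivial since $\phi(z) \ne z$ (here $f(x) \ne 0$ in $A$ because $A$ is a domain by Lemma \ref{l3.1}(i) and $K[x] \hookrightarrow A$). Then clearly $K[x] \subseteq A^{\phi}$. For the reverse inclusion, note $A^{\phi}$ is algebraically closed in $A$ by Lemma \ref{l2.4}(i), has $\mathrm{trdeg}_K(A^{\phi}) = \mathrm{trdeg}_K(A) - 1 = 1$ by Lemma \ref{l2.4}(iv), and is contained in a ring where $z$ becomes transcendental over $K(x)$: since $\mathrm{deg}_{\phi}(z) = 1 > 0$, $z \notin A^{\phi}$, and using $S^{-1}A = K(x)[z]$ from Lemma \ref{l3.1}(iii) with $S = K[x]\setminus\{0\} \subseteq A^{\phi}$ (valid as $S \subseteq A^{\phi}$), Lemma \ref{l2.4}(vi) gives $S^{-1}(A^{\phi}) = (S^{-1}A)^{S^{-1}\phi} \subseteq K(x)[z]$; since $S^{-1}\phi$ sends $z \mapsto z + f(x)U$ over the field $K(x)$, its ring of invariants is exactly $K(x)$, so $S^{-1}(A^{\phi}) = K(x)$. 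Combining $K[x] \subseteq A^{\phi} \subseteq K(x) \cap A = K[x]$, where the last equality is Lemma \ref{l3.1}(ii), yields $A^{\phi} = K[x]$. The statement $\mathrm{ML}(A) \subseteq K[x]$ is then immediate from the definition of the Makar-Limanov invariant as an intersection over all exponential maps.

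For part (ii), I would argue as follows. The inclusion $\supseteq$ is clear since $K[x] \oplus K[x]z \subseteq A$ and obviously sits inside $K(x) \oplus K(x)z$. For $\subseteq$, take $g \in A \cap (K(x) \oplus K(x)z)$ and write $g = \alpha(x) + \beta(x)z$ with $\alpha, \beta \in K(x)$. Using the normal form of Lemma \ref{l3.1}(iv), $g = \sum_{i=0}^{m} g_i(x,z) y^i$ with each $g_i \in K[X,Z]$ of $Z$-degree at most $d-1 \geq 1$. Passing to $S^{-1}A = K(x)[z]$ and using that $\{1, z, \dots, z^{d-1}, y, \dots\}$-type expressions are controlled, I would compare the two representations of $g$ inside $K(x)[z]$ (where $y = \varphi(x,z)/f(x)$ is a polynomial in $z$ of degree $d$ over $K(x)$). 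The key point is that in $K(x)[z]$, the element $g$ has $z$-degree $\leq 1$; since each $y^i$ contributes $z$-degree $di$, any term with $i \geq 1$ has $z$-degree $\geq d \geq 2$, and these cannot cancel against the $g_0$ part (which has $z$-degree $\leq d-1$) unless all $g_i = 0$ for $i \geq 1$ — one must check the leading terms in $z$ do not collude, which holds because $\varphi$ is monic in $Z$. Hence $g = g_0(x,z) \in K[x][z]$ with $\mathrm{deg}_z(g_0) \leq 1$, i.e. $g = \alpha(x) + \beta(x) z$ with $\alpha, \beta \in K[x]$, as desired.

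The main obstacle I anticipate is the bookkeeping in part (ii): making the degree-comparison argument in $S^{-1}A = K(x)[z]$ fully rigorous requires carefully tracking how the normal-form monomials $g_i(x,z)y^i$ expand when $y$ is replaced by $\varphi(x,z)/f(x)$, and confirming that no cancellation of the top $z$-degree terms can occur across different powers of $y$. Because $\varphi$ is monic of degree $d$ in $Z$, the term $g_m(x,z)y^m$ contributes a unique top $z$-degree $d m + \mathrm{deg}_z(g_m)$ which is strictly larger than the $z$-degrees of all lower terms when $m \geq 1$; this forces $m = 0$. The verification that $\phi$ in part (i) genuinely satisfies the exponential-map axioms (especially axiom (ii), $\phi_V \circ \phi_U = \phi_{V+U}$) is routine once one observes it reduces to the additivity of the shift $z \mapsto z + f(x)U$ in the parameter, but it should be spelled out. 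Everything else follows cleanly from the cited lemmas.
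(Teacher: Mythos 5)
Your proposal is correct and follows essentially the same route as the paper: the same explicit exponential map $\phi(x)=x$, $\phi(z)=z+f(x)U$, $\phi(y)=\varphi(x,z+f(x)U)/f(x)$ for (i), and for (ii) the same substitution $y=\varphi(x,z)/f(x)$ in $K(x)[z]$ followed by a $z$-degree comparison using that $\varphi$ is monic in $Z$ with $d\geq 2$. The only minor variation is in deducing $A^{\phi}\subseteq K[x]$: you localize at $S=K[x]\setminus\{0\}$ and invoke Lemma \ref{l2.4}(v),(vi) together with $K(x)\cap A=K[x]$, whereas the paper argues via inertness (algebraic closedness) of $K[x]$ in $A$ and equality of transcendence degrees — both rest on Lemma \ref{l3.1}(ii) and are equally valid.
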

					\begin{proof}
				(i) Define a map  $\phi : A\longrightarrow A[U]$ by 
				\begin{alignat*}{3}
					&\phi(x)&=&x\\
					&\phi(z)&=&z+f(x)U\\
					&\phi(y)&=&\frac{\varphi(x,z+f(x)U)}{f(x)} =y+ U\alpha(x,z,U) \text{ for some $\alpha \in K^{[3]}$}.
				\end{alignat*}
				\indent
				\smallskip
				Then it is easy to check that $\phi$ is an exponential map such that $K[x]\subseteq A^{\phi} $. Since $K[x]$ is inert in $A$ by Lemma \ref{l3.1}(ii), it is algebraically closed in $A$ and since $\text{trdeg}_KK[x]=\text{trdeg}_KA^{\phi}=1$ we get that $A^{\phi}=K[x]$. 
				\newline
				\smallskip
				(ii) Suppose $g$ is in $A \cap (K(x)\oplus K(x)z)$.
				Then $g=\alpha(x)+\beta(x)z$ for some $\alpha, \beta \in K(x)$. From Lemma \ref{l3.1}(iv), we have
				\[g=\sum_{\substack{0\leq j\\0\leq i\leq  d-1}}a_{ij}(x)z^iy^j\] for some $a_{ij}\in k[x]$. Using the relation $y=\frac{\varphi(x,z)}{f(x)}$, we have the following identity in $K(x)[z]$,
				\[g=\alpha(x)+\beta(x)z=\sum_{\substack{0\leq j\\0\leq i\leq  d-1}}\frac{a_{ij}(x)}{f(x)^j}z^i\varphi(x,z)^j.\] Since $x,z$ are algebraically independent over $K$, and $\varphi$ is monic in $z$ and  $d={\rm deg}_Z(\varphi)\geq 2$, we get $a_{ij}=0$ for $j\geq 1$ and $i\geq 2$. Thus $A \cap (K(x)\oplus K(x)z)\subset K[x]\oplus K[x]z$. The other side inclusion is trivial. Hence we get $A \cap (K(x)\oplus K(x)z)=K[x]\oplus K[x]z$.  
			\end{proof}
			
			\begin{prop}\label{supthm}
				\medskip
				Let $K$ be any field of arbitrary characteristic. Suppose $B=\frac{K[U,V,W]}{(f(U)V-W^d)}$ where $f(U)=U^r+a_{r-1}U^{r-1}+...+a_1U+a_0$ is a monic polynomial in $U$, $d\geq2,r\geq2$ and $u, v, w$ are the images in $B$ of $U, V, W$  respectively . Consider the graded structure on $B$ such that $wt(u)=0, wt(v)=d, wt(w)=1$. Then there does not exist any nontrivial homogeneous exponential map $\phi$ on $B$ such that $v\in B^{\phi}$.
			 
			\end{prop}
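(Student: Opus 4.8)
The plan is to argue by contradiction. Suppose $\phi$ is a non-trivial homogeneous exponential map on $B$ with $v\in B^{\phi}$; the idea is to localize so that $v$ becomes invertible and then contradict Lemma \ref{l2.4}(v). Observe that $B$ is a member of the family of Lemma \ref{l3.1} (the case $\varphi(X,Z)=Z^{d}$), so it is a two-dimensional affine domain, and by Lemma \ref{l2.4}(iv),(i) the invariant ring $B^{\phi}$ is one-dimensional, algebraically closed in $B$, and inert in $B$. Set $T=K[v]\setminus\{0\}\subseteq B^{\phi}\setminus\{0\}$. By Lemma \ref{l2.4}(vi), $\phi$ extends to an exponential map on $C:=T^{-1}B=K(v)[u,w]/\bigl(vf(u)-w^{d}\bigr)$ with invariant ring $T^{-1}(B^{\phi})$, and this extension is still non-trivial: if $T^{-1}B=T^{-1}(B^{\phi})$ then each $b\in B$ has $sb\in B^{\phi}$ for some $s\in T$, and inertness of $B^{\phi}$ forces $b\in B^{\phi}$, contradicting non-triviality of $\phi$.

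Since $\operatorname{trdeg}_{K(v)}C=\operatorname{trdeg}_{K}C-1=1$, Lemma \ref{l2.4}(v) applied over the base field $K(v)$ tells us that $C$ is a polynomial ring in one variable over a field. So it suffices to show that $C$ is \emph{not} of the form $L^{[1]}$ for any field $L$; any such $L$ would necessarily contain $K(v)$ and be finite over it. I would analyze $C$ after base change to $\overline{K(v)}$: choosing a $d$-th root $\beta$ of $v$ and substituting $w\mapsto\beta w$ identifies $C\otimes_{K(v)}\overline{K(v)}$ with $\overline{K(v)}\otimes_{\overline K}D$, where $D:=\overline K[u,w]/\bigl(f(u)-w^{d}\bigr)$ is the coordinate ring of the affine curve $\{\,w^{d}=f(u)\,\}$ over $\overline K$. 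Thus $C\cong L^{[1]}$ would force $\bigl(C\otimes\overline{K(v)}\bigr)_{\mathrm{red}}$ to be a finite product of polynomial rings $\overline{K(v)}^{[1]}$, in particular a \emph{regular} ring.

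The contradiction then breaks into two cases according to whether $\{w^{d}=f(u)\}$ is smooth. If $f$ has a repeated root in $\overline K$, or $\operatorname{char}K$ divides $d$, then this curve --- or its reduction --- is singular; equivalently $C$ is not regular (which one can also verify directly by exhibiting an element of the normalization of $C$ that is not in $C$), and this contradicts the last line of the previous paragraph. If instead $f$ has only simple roots and $\operatorname{char}K\nmid d$, then $D$ is a smooth affine curve and $f(u)-w^{d}$ remains irreducible over $\overline{K(v)}$, so $C\otimes\overline{K(v)}$ is a domain; this forces $L\otimes_{K(v)}\overline{K(v)}$ to be a field, hence $L=K(v)$, hence $D\otimes_{\overline K}\overline{K(v)}\cong\overline{K(v)}^{[1]}$, and then, by descent of the genus and of the number of points at infinity, $D\cong\overline K^{[1]}$ --- i.e.\ $\{w^{d}=f(u)\}\cong\mathbb{A}^{1}_{\overline K}$. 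But Riemann--Hurwitz for the degree-$d$ projection to the $u$-line from the smooth projective model gives genus $\tfrac12\bigl((d-1)(r-1)-\gcd(d,r)+1\bigr)$, which is positive unless $d=r=2$; and for $d=r=2$ the affine curve is isomorphic to $\overline K[s,s^{-1}]\not\cong\overline K^{[1]}$. (Alternatively, when $d\nmid r$ and $r\nmid d$ one gets a direct contradiction from the Abhyankar--Moh consequence, Theorem \ref{abhmoh}, which applies since then $\operatorname{char}K\nmid\gcd(r,d)$.) Either way we reach a contradiction, which proves the proposition.

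The step I expect to be the real obstacle is this last one: ruling out $C\cong L^{[1]}$ uniformly in the characteristic. Its conceptual core is the genus estimate for $\{w^{d}=f(u)\}$, but the positive-characteristic degeneracies --- that $C$ may fail even to be normal when $\operatorname{char}K\mid d$ or $f$ is inseparable, and that $L/K(v)$ might itself be inseparable --- have to be dispatched by hand, and that is where most of the technical work lies.
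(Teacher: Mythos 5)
Your overall strategy---localize at $K[v]\setminus\{0\}$ (legitimate since $v\in B^{\phi}$ gives $K[v]\subseteq B^{\phi}$), invoke Lemma \ref{l2.4}(v) to get $C=K(v)[u,w]/(vf(u)-w^{d})\cong L^{[1]}$, and then rule this out---is exactly the second half of the paper's proof. But you never use the \emph{homogeneity} hypothesis, and that is where the gap opens. The paper's first step is to exploit the grading: from $\phi(f(u))v=\phi(w)^{d}$, comparing top $T$-degree terms of the homogeneous map forces $\alpha r+1=\beta d$, hence ${\rm gcd}(r,d)=1$. This single fact both supplies the final contradiction (Abhyankar--Moh gives $r\mid d$ or $d\mid r$) and guarantees that Theorem \ref{abhmoh} is applicable in every characteristic, since $p\nmid{\rm gcd}(r,d)=1$. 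By discarding homogeneity you are attempting a strictly stronger statement, and precisely the characteristic-$p$ degeneracies you flag at the end are not actually dispatched by your case analysis.

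Concretely, your Case 1 claim (``if ${\rm char}\,K\mid d$ then the curve $\{w^{d}=f(u)\}$ or its reduction is singular'') is false. Take ${\rm char}\,K=2$, $d=r=2$, $f(u)=u^{2}+u$: the curve $w^{2}=u^{2}+u$ is smooth over $\overline K$ and in fact isomorphic to $\mathbb{A}^{1}_{\overline K}$ (set $s=u+w$, so $u=s^{2}$, $w=s+s^{2}$). So neither the singularity claim of Case 1 nor the Riemann--Hurwitz genus estimate of Case 2 (which excludes $p\mid d$) yields a contradiction; $(C\otimes\overline{K(v)})_{\mathrm{red}}$ is a polynomial ring and perfectly regular. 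The statement is still true in this example, but for an arithmetic reason your sketch cannot see: $C=K(v)[u,w]/\bigl(v(u^{2}+u)-w^{2}\bigr)$ is a nontrivial purely inseparable form of $\mathbb{A}^{1}$ over the imperfect field $K(v)$ (its smooth completion is $\mathbb{P}^{1}$, but the boundary is a single degree-$2$ inseparable closed point), so $C\not\cong L^{[1]}$---yet proving this requires an argument about forms of $\mathbb{A}^{1}$, not genus or singularities. A similar confusion occurs in the repeated-root subcase: when $f$ is a $p$-th power the reduced base change can again be regular (e.g.\ $w^{2}=u^{2}$ in characteristic $2$ reduces to a line), so the object you contradict must be regularity of $C$ itself, not of $(C\otimes\overline{K(v)})_{\mathrm{red}}$. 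In short: the localization-plus-Abhyankar--Moh half of your plan matches the paper, but the missing first half---using the graded structure to pin down $B^{\phi}=K[v]$ and force ${\rm gcd}(r,d)=1$---is what makes the paper's proof immune to these positive-characteristic pathologies, and without it your Case 1 is genuinely broken.
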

			\begin{proof} Suppose, if possible, there exists a nontrivial homogeneous exponential map $\phi : B\longrightarrow B[T]$ such that $v\in B^{\phi}$. We first show that in this case $B^{\phi}=k[v]$. 
				\smallskip
				  As $\phi$ is nontrivial and $B^{\phi}$ is inert in $B$, $v\in B $, neither $u $ nor $w$ can be in $B^{\phi}$ and we have $B^{\phi}\cap K[u]=K$ and $B^{\phi}\cap K[w]=K$.
				  \indent
				  
				   Let $B=\oplus_{i\geq 0}B_i$ be the given graded structure. We know that $B^{\phi}$ is a graded subalgebra of $B$. Let $R=B^{\phi}=\oplus_{i\geq 0}R_i$, where $R_i=R\cap B_i$. From the specified weights of $u,v,w$, it is easy to check $R_0=K$, $R_1=0$,..., $R_{d-1}=0$, $R_d=Kv$. Then as $B^{\phi}$ is inert in $B$, from induction and using the relation $w^d=f(u)v$ whenever needed, we get that for each n, $R_{nd}=Kv^{n}$ and, for $1\leq i\leq (d-1)$, $R_{nd+i}=0$. Therefore $B^{\phi}=K[v]$.
				\indent
				
			     We now show that if such a $\phi$ exists then ${\rm gcd}(r,d)=1$. Let ${\rm deg}_{\phi}(u)=n$, ${\rm deg}_{\phi}(w)=m$. As $\phi^n(u), \phi^m(w) \in B^{\phi}=K[v]$, we get that
			    $\phi(u)=u+...+g(v)T^n $ for some $g(v)\in K[v]$ and $\phi(w)=w+...+h(v)T^m  $ for some $ h(v)\in K[v]$. Let $\alpha= {\rm deg}_v(g(v))$ and $\beta={\rm deg}_v(h(v))$.
			    As $\phi$ is homogeneous, we have $ {\rm grdeg}(u)={\rm grdeg}(g(v)T^n)$ and ${\rm grdeg}(w)={\rm grdeg}(h(v)T^m))$ implying that $g(v)=\lambda v^{\alpha}$ and $h(v)=\mu v^{\beta}$, for some $\lambda, \mu\in K^*$ . From the relation $f(u)v=w^d$, we get 
			    \begin{alignat*}{3}
			    	&\phi(f(u))v=\phi(w^d)\\
			    	\Rightarrow &\phi(f(u))v=\phi(w)^d\\
			    	\Rightarrow &(a_0+a_1\phi(u)+...+\phi(u)^r)v=\phi(w)^d.
			    \end{alignat*}
			    
			    Comparing the leading $T$-degree terms on both sides, we get	\begin{alignat*}{3}
			    	&v(g(v)^rT^{nr})&=&h(v)^dT^{md}\\
			    	\Rightarrow &\lambda^r v^{1+\alpha r }T^{nr}&=&\mu^d v^{\beta d}T^{md}\\
			    	\Rightarrow &\alpha r+1&=&\beta d\\
			    	\Rightarrow &{\rm gcd}(r,d)&=&1.
			    \end{alignat*}

				 By Lemma \ref*{l2.4}, we can extend $\phi$ to a nontrivial exponential map on the localised ring $\hat{B}=\frac{K(V)[U,W]}{(f(U)V-W^d)}$. But as ${\rm trdeg}_{K(v)}(\hat{B})=1$, $\hat{B}$ must be a polynomial ring over a field by Lemma \ref{l2.4}, in particular, regular. If $f$ has at least one double root, say $a$, then $\hat{B}_{(u-a,w)}$ could not be a regular local ring, a contradiction. Thus all the roots of $f$ are simple roots. But then using Eisenstein's criterion, we get that $\hat{B}$ is a geometrically integral domain over $K(V)$. Hence we must have $\hat{B}=K(V)^{[1]}$. Therefore, by Theorem \ref{abhmoh}, either $r\mid d$ or $d\mid r$,  contradicting that ${\rm gcd}(r,d)=1$. Thus there does not exist any nontrivial homogeneous exponential map on $B$ such that $v\in B^{\phi}$.
			\end{proof}
			\medskip
			
			\begin{lem}\label{RedLem}
				Let $K$ be any field of arbitrary characteristic, $f=X^r+a_{r-1}X^{r-1}+...+a_1X+a_0$ is a monic polynomial in $K[X]$ with degree $r\geq2$, $\varphi(X,Z)=Z^d+c_{d-1}(X)Z^{d-1}+...c_1(X)Z^1+c_0(X)$ is a polynomial in $K[X,Z]$ such that $d\geq2$ and $A=\frac{K[X,Y,Z]}{(f(X)Y-\varphi(X,Z) )} $. Let $x,y,z$ be the images of $X,Y,Z$ respectively in $A$. Then there exists a proper admissible $\mathbb{Z}$-filtration $\{A_i\}_{i\in \mathbb{Z}}$ on $A$ such that $x\in A_0\setminus A_{-1}$, $y\in A_{d}\setminus A_{d-1}$ and $z\in A_{1}\setminus A_0$. Further, if $B$ is the associated graded algebra, then $B\cong\frac{K[U,V,W]}{(f(U)V-W^d)}$.
			\end{lem}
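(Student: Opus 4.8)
The plan is to equip $P:=K[X,Y,Z]$ with the weighted grading $P=\bigoplus_{i\ge 0}P_i$ in which $\operatorname{wt}(X)=0$, $\operatorname{wt}(Z)=1$, $\operatorname{wt}(Y)=d$ (so $P_0=K[X]$), to transport the associated filtration $P_{\le n}:=\bigoplus_{i\le n}P_i$ across the quotient map $\pi:P\to A$, where $F:=f(X)Y-\varphi(X,Z)$, and to check this filtration does the job. Concretely, take $A_n:=\pi(P_{\le n})$, so $A_n=0$ for $n<0$; equivalently, by the canonical form of Lemma~\ref{l3.1}(iv), $A_n$ is the $K$-span of the monomials $x^iz^jy^k$ with $i,k\ge 0$, $0\le j\le d-1$ and $j+dk\le n$. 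The two descriptions agree because reducing a power $z^c$ with $c\ge d$ by means of $z^d=f(x)y-\sum_{0\le i<d}c_i(x)z^i$ replaces it by terms of weight $\le c$, the weight-$c$ part being $f(x)yz^{c-d}$. Since $P_{\le n}=0$ for $n<0$ and $A=\bigcup_nA_n$, conditions (i)--(iii) of a proper $\mathbb{Z}$-filtration are immediate; likewise $x\in A_0\setminus A_{-1}$ is clear, $z\in A_1\setminus A_0$ because $z\notin K[x]$ (Lemma~\ref{l3.1}), and $y\in A_d\setminus A_{d-1}$ because the canonical form of $y$ has nonzero coefficient of $y$.

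The substance is the identification of $\gr(A)=\bigoplus_nA_n/A_{n-1}$. Let $F^{*}:=f(X)Y-Z^d$ be the top-weight homogeneous component of $F$ (the terms $c_i(X)Z^i$ have weight $i<d$) and write $\rho(a)=a+A_{n-1}$ for $a\in A_n\setminus A_{n-1}$, as in Section~2. Mapping $U\mapsto\rho(x)$, $W\mapsto\rho(z)$, $V\mapsto\rho(y)$ defines a graded $K$-algebra homomorphism $\bar\psi:\frac{K[U,V,W]}{(f(U)V-W^d)}\to\gr(A)$, because a direct computation in $\gr(A)$ gives $f(\rho(x))\rho(y)-\rho(z)^d=\bigl(f(x)y-z^d\bigr)+A_{d-1}=0$, the last equality holding since $f(x)y-z^d=\sum_{0\le i<d}c_i(x)z^i\in A_{d-1}$. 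This $\bar\psi$ is surjective, as $\gr(A)$ is generated by $\rho(x),\rho(y),\rho(z)$. For injectivity I would compare bases: the monomials $U^iW^jV^k$ ($i,k\ge 0$, $0\le j\le d-1$) form a $K$-basis of $\frac{K[U,V,W]}{(f(U)V-W^d)}$ — use $W^d=f(U)V$ to reduce $W$-powers $\ge d$, and compare $W$-degrees for linear independence — and $\bar\psi$ sends $U^iW^jV^k$ to $x^iz^jy^k+A_{(j+dk)-1}$, which by Lemma~\ref{l3.1}(iv), after grouping by the value $n=j+dk$, run exactly over a $K$-basis of each $A_n/A_{n-1}$. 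Thus $\bar\psi$ takes a basis bijectively to a basis and is an isomorphism, so $\gr(A)\cong\frac{K[U,V,W]}{(f(U)V-W^d)}$. (Alternatively, since $P$ is a graded domain, leading forms multiply, the initial ideal of $(F)$ equals $(F^{*})$, and $\gr(A)\cong P/(F^{*})$ follows directly.)

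The last filtration axiom is then automatic. Since $\gcd(f(U),W^d)=1$ in the UFD $K[U,W]$, Lemma~\ref{l1} shows $\frac{K[U,V,W]}{(f(U)V-W^d)}$, hence $\gr(A)$, is an integral domain. The product $A_n/A_{n-1}\times A_m/A_{m-1}\to A_{n+m}/A_{n+m-1}$ in $\gr(A)$ is the ``lift, multiply, reduce'' map (well defined since $A_nA_m\subseteq A_{n+m}$), so for $g\in A_n\setminus A_{n-1}$, $h\in A_m\setminus A_{m-1}$ one has $\rho(g)\rho(h)\ne 0$, i.e.\ $gh\in A_{n+m}\setminus A_{n+m-1}$, which is condition (iv). Admissibility holds with $\Gamma=\{x,y,z\}$: an element of $A_n$, written in canonical form, is a finite sum of monomials $x^iz^jy^k$ ($0\le j\le d-1$), each of weight $j+dk\le n$ and hence lying in $A_n$.

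The step I expect to be the real obstacle is the injectivity of $\bar\psi$ — equivalently, condition (iv) itself, i.e.\ ruling out ``hidden'' lower-weight relations in $\gr(A)$. Everything else is bookkeeping with Lemma~\ref{l3.1}(iv), but this point genuinely requires either the structural fact that $P$ is a graded domain (so leading forms are multiplicative) or the explicit double-basis comparison above; I would present the latter so that the section stays self-contained.
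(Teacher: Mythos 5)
Your proposal is correct, but it verifies the two substantive points by a different route than the paper. The filtration you construct is in fact the same one (both assign weight $j+dk$ to the canonical monomials $x^iz^jy^k$ of Lemma \ref{l3.1}(iv)), but the paper realizes it by embedding $A$ into the graded domain $D=K[x,f(x)^{-1},z]=\bigoplus_{i\geq 0}K[x,f(x)^{-1}]z^i$ (using $y=\varphi(x,z)/f(x)$) and intersecting the degree filtration of $D$ with $A$; this makes the multiplicativity condition (iv) automatic, since leading forms are computed inside the domain $D$, and the identification of the associated graded ring is then obtained from the surjection $K[U,V,W]/(f(U)V-W^d)\twoheadrightarrow B$ killing $f(U)V-W^d$ together with a transcendence-degree count (the source being a domain of ${\rm trdeg}_K=2$ by Lemma \ref{l1}). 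You instead take the image of the weight filtration on $K[X,Y,Z]$, describe each $A_n$ explicitly as the span of canonical monomials of weight $\leq n$, and prove injectivity of $\bar\psi$ by a double basis comparison, deducing condition (iv) afterwards from the computed $\gr(A)$ being a domain. That reversal of logic is legitimate: only $A_nA_m\subseteq A_{n+m}$ (clear for an image filtration) is needed to define the graded ring and the map $\rho$, so no circularity occurs, though note that the paper's definition of $\gr(A)$ is stated for proper filtrations, so you should flag this explicitly. The trade-off is that your argument is more self-contained and explicit (everything reduces to Lemma \ref{l3.1}(iv) bookkeeping), whereas the paper's embedding into $D$ buys condition (iv) for free and replaces the basis comparison with a one-line ${\rm trdeg}$ argument; your parenthetical alternative via the initial ideal $(F^{*})$ of $(F)$ is essentially a compressed version of the same idea and is also fine.
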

			\begin{proof}
					 We note that $A$ is a subring of $D=K[x,f(x)^{-1},z]$ and $D$ has a natural grading as $D=\bigoplus_{i\geq 0}K[x,f(x)^{-1}]z^i=\bigoplus_{i\in \mathbb{Z}}D_i$ where $D_i=0$ for all $i<0$ and $D_i= k[x,f(x)^{-1}]z^i$ for all $i\geq 0$. Now as D is a finitely generated graded $K$-domain, the filtration $\widetilde{D}_i$ defined by $\widetilde{D}_i=\bigoplus_{j\leq i}D_j$ is a proper admissible $\mathbb{Z}$-filtration on D. So $A_i=\widetilde{D}_i\cap A$ is a proper $\mathbb{Z}$-filtration on $A$. As any $g$ in $A$ can be expressed uniquely as $g=g_{0}(x,z)+g_1(x,z)y+...+g_m(x,z)y^m$   for some polynomials $g_i$'s in $K[X,Z]$ such that $0\leq {\rm deg}_Z(g_i(X,Z))\leq d-1$ for all $i$, $0\leq i\leq m$ and $m\geq 0$, we get that this filtration $\{A_i\}_{i\in\mathbb{Z}}$ is a proper admissible filtration with finite generating set $\Gamma=\{x,y,z\}$. Clearly $x\in A_0\setminus A_{-1}$, $y\in A_{d}\setminus A_{d-1}$ and $z\in A_{1}\setminus A_0$. For an element $a$ of $A$, let $\overline{a}$ denote the image of $a$ under the map $\rho:A\longrightarrow B$. Then $B$ is generated by $\overline{x},\overline{y} ,\overline{z}$.
				\indent
				
				Now we prove that $B$ is isomorphic to $\frac{K[U,V,W]}{(f(U)V-W^d)}$, where $u=\overline{U},v=\overline{V},w=\overline{W} $ have been identified with $\overline{x},\overline{y} ,\overline{z}$ respectively.
				Define a surjective map $\eta : K[U,V,W]\longrightarrow B$ by $\eta(U)=\overline{x},\eta(V)=\overline{y}, \eta(W)=\overline{Z}$. From the relation $f(x)y=\varphi(x,z)$ in $A$ we get that $\overline{f(x)y}=\overline{f(x)}\overline{y}=f(\overline{x})\overline{y}=\overline{z^d}=\overline{z}^d=\overline{\varphi(x,z)}$ in $B$.
				Thus $\eta(f(U)V-W^d)=0$ and as  ${\rm trdeg}_K(B)=2$  we have an isomorphism $$\overline{\eta}:\frac{K[U,V,W]}{(f(U)V-W^d)}\cong_{K} B.$$
			\end{proof}
			Now we prove our main theorem.
			\begin{thm}\label{mtheo}
				\smallskip
			Under the hypotheses of Lemma \ref{RedLem}, ring of invariants of any non trivial exponential map $\phi$ on $A$ is $k[x]$.  
			\end{thm}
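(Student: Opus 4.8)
The plan is to push $\phi$ down to the graded ring of Lemma~\ref{RedLem} and invoke Proposition~\ref{supthm}. Let $\phi$ be a non-trivial exponential map on $A$. By Lemma~\ref{l2.4}(iv) we have ${\rm trdeg}_K(A^{\phi})=1$, and by Lemma~\ref{l2.4}(i) the ring $A^{\phi}$ is inert, hence algebraically closed, in $A$. I first observe that it suffices to establish the inclusion $A^{\phi}\subseteq K[x]$: granting this, $K(x)$ is algebraic over ${\rm Frac}(A^{\phi})$ by the transcendence-degree count, so every element of $K[x]\subseteq A$ is algebraic over $A^{\phi}$ and therefore lies in $A^{\phi}$ by algebraic closedness; hence $A^{\phi}=K[x]$.

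To prove $A^{\phi}\subseteq K[x]$, I would equip $A$ with the proper admissible $\mathbb{Z}$-filtration $\{A_i\}_{i\in\mathbb{Z}}$ of Lemma~\ref{RedLem}, so that the associated graded domain is $B={\rm gr}(A)\cong\frac{K[U,V,W]}{(f(U)V-W^{d})}$ with the grading ${\rm wt}(u)=0$, ${\rm wt}(v)=d$, ${\rm wt}(w)=1$. By Theorem~\ref{hadas-derk}, $\phi$ induces a non-trivial homogeneous exponential map $\overline{\phi}$ on $B$ with $\rho(A^{\phi})\subseteq B^{\overline{\phi}}$, and $B^{\overline{\phi}}$ is a graded subalgebra of $B$ since $\overline{\phi}$ is homogeneous. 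I claim $B^{\overline{\phi}}\subseteq B_0=K[u]$. Indeed, $v\notin B^{\overline{\phi}}$ by Proposition~\ref{supthm}, and $w\notin B^{\overline{\phi}}$, for otherwise $w^{d}=f(u)v\in B^{\overline{\phi}}$ would force $v\in B^{\overline{\phi}}$ by inertness. Writing $B=\bigoplus_{c=0}^{d-1}K[u,v]\,w^{c}$ as a free $K[u,v]$-module shows that the weight-$i$ component is $B_i=K[u]\,v^{\lfloor i/d\rfloor}\,w^{\,i\bmod d}$; so a non-zero homogeneous element of $B^{\overline{\phi}}$ of weight $i>0$ would have the form $q(u)\,v^{k}w^{s}$ with $q\neq0$, $i=dk+s$ and $0\leq s<d$, and since $B^{\overline{\phi}}$ is inert this would put $v$ into $B^{\overline{\phi}}$ when $k\geq1$ and $w$ into $B^{\overline{\phi}}$ when $s\geq1$ --- impossible, as $i>0$ forces $k\geq1$ or $s\geq1$. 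Hence $B^{\overline{\phi}}\subseteq B_0=K[u]$.

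It then follows that $\rho(A^{\phi})\subseteq B_0$, i.e.\ every non-zero $a\in A^{\phi}$ lies in $A_0\setminus A_{-1}$; in particular $A^{\phi}\subseteq A_0$. By the construction in Lemma~\ref{RedLem}, $A_0=\widetilde{D}_0\cap A=K[x,f(x)^{-1}]\cap A\subseteq K(x)\cap A=K[x]$, the last equality being Lemma~\ref{l3.1}(ii). Thus $A^{\phi}\subseteq K[x]$, and the reduction in the first paragraph completes the argument. The only substantial input is Proposition~\ref{supthm} (which itself rests on the Abhyankar--Moh theorem, Theorem~\ref{abhmoh}); the remaining steps are a routine weight computation on $B$ together with the inertness properties from Lemma~\ref{l2.4}, so I expect no genuine difficulty beyond correctly transporting information between $A$ and ${\rm gr}(A)$.
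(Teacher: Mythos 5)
Your proposal is correct and is essentially the paper's own argument: both pass to the filtration of Lemma \ref{RedLem}, apply Theorem \ref{hadas-derk} to get a nontrivial homogeneous $\overline{\phi}$ on $B$, and use inertness together with Proposition \ref{supthm} to exclude $v$ (and hence $w$) from $B^{\overline{\phi}}$, then recover $K[x]\subseteq A^{\phi}$ from algebraic closedness and the transcendence-degree count. The only cosmetic difference is that you bound $B^{\overline{\phi}}\subseteq B_0=K[u]$ by the weight computation and pull back via $\rho$ and $A_0\subseteq K(x)\cap A=K[x]$, whereas the paper reaches the same contradiction by inspecting the leading form $\lambda(x)y^mz^j$ of a given invariant using the expansion of Lemma \ref{l3.1}(iv).
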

			\begin{proof}
				\smallskip

				Let $\phi$ be a nontrivial exponential map on $A$. Any $g\in A$ can be written as  $g=g_{0}(x,z)+g_1(x,z)y+...+g_m(x,z)y^m$   for some polynomials $g_i$'s in $K[X,Z]$ such that $0\leq {\rm deg}_Z(g_i(X,Z))\leq d-1$ for all $i$, $0\leq i\leq m$ and $m\geq 0$ from Lemma \ref{l3.1}(iv). We consider the degree induced from the filtration $\{A_i\}_{i\in \mathbb{Z}}$. Let $\hat{g}$ denote the highest degree summand of $g$ in the expansion $g=g_{0}(x,z)+g_1(x,z)y+g_1(x,z)y+...+g_m(x,z)y^m$.
				Let $\overline{g}$ be as defined in the proof of Lemma \ref{RedLem}. Clearly $\rho(g)=\rho(\hat{g})$. Suppose $g\in A^{\phi}\setminus K$. By Lemma \ref{RedLem} and Theorem \ref{hadas-derk}, $\phi$ induces a non trivial exponential map $\overline{\phi}$ on $B=\frac{K[U,V,W]}{(f(U)V-W^d)}$ with  $\overline{g}\in B^{\overline{\phi}}$ (from the proof of Lemma \ref{RedLem}, we know images of $U,V,W$ in $B$ have been identified with $\overline{x},\overline{y} ,\overline{z}$ respectively). It is enough to show that $g\in K[x]$. 
				 We note that if $g\in K[x,z]$, i.e $m=0$, then $g$ must be in $K[x]$. For if $g\in K[x,z]$ then $\hat{g}=x^iz^j$. If $j=0$, then $g\in K[x]\setminus K$ and we are done. Otherwise if $j>0$, then as $\overline{g}=\overline{\hat{g}}\in B^{\overline{\phi}}$, we get $\overline{z}\in B^{\overline{\phi}}$. Using the relation $f(\overline{x})\overline{y}=\overline{z}^d$ and the fact that $B^{\overline{\phi}}$ is factorially closed, we get  $\overline{x},\overline{y},\overline{z}\in B^{\overline{\phi}}$ and hence $\overline{\phi}$ is trivial which is a contradiction. Hence $j=0$ and $g\in K[x]\setminus K$. 
				Suppose that $m\geq 1$.
				First we note that whenever $0\leq j_1, j_2\leq (d-1)$, $${\rm Deg}(\lambda_1(x)y^{i_1}z^{j_1}) ={\rm Deg}(\lambda _2(x)y^{i_2}z^{j_2})\Rightarrow i_1=i_2, j_1=j_2.$$
				\indent
				Thus $\hat{g}=\lambda(x)y^mz^j$ for some $j\geq0$ and $\lambda(x)\in K[x]$. As $m\geq1$, $B^{\overline{\phi}}$ is inert and by Theorem \ref{hadas-derk}, $\overline{g}=\overline{\hat{g}}\in B^{\overline{\phi}}$, we have $\overline{y}\in B^{\overline{\phi}}$. Thus from Proposition \ref{supthm} we reach a contradiction. Hence $m=0$. And thus $A^{\phi}=K[x]$.
			\end{proof}
			From Theorem \ref*{mtheo} and Lemma \ref{l3.2}, we get the following result:
			\begin{cor}\label{mtheinv}
				\medskip
				 Let $K$ be any field of arbitrary characteristic and $A=\frac{K[X,Y,Z]}{(f(X)Y-\varphi(X,Z) )} $ with $f=X^r+a_{r-1}X^{r-1}+...+a_1X+a_0$ and $\varphi(X,Z)=Z^d+c_{d-1}(X)Z^{d-1}+...+ c_1(X)Z^1+c_0(X)$ such that $d\geq2, r\geq2$ and let $x$ denote the image of $X$ in $A$. Then ${\rm ML}(A)=K[x]$ and ${\rm DK}(A)=K[x]$.
			\end{cor}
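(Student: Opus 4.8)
The plan is to deduce the corollary formally from Theorem~\ref{mtheo} together with Lemma~\ref{l3.2}(i), with essentially no extra work beyond bookkeeping over the set $exp(A)$.

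First I would invoke Lemma~\ref{l3.2}(i): the explicit map $\phi$ defined there is a \emph{non-trivial} exponential map on $A$ with $A^{\phi}=K[x]$. This serves two purposes at once. It shows $exp(A)$ contains at least one non-trivial element, so that the Makar--Limanov invariant is not simply $A$ and the Derksen invariant is not simply $K$; and it gives the inclusions $\mathrm{ML}(A)\subseteq A^{\phi}=K[x]$ and $K[x]=A^{\phi}\subseteq \mathrm{DK}(A)$.

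Next I would apply Theorem~\ref{mtheo}: for \emph{every} non-trivial exponential map $\psi$ on $A$ one has $A^{\psi}=K[x]$. Hence, writing the intersection defining the Makar--Limanov invariant as a product over the trivial map (which contributes $A^{\psi}=A$) and the non-trivial maps (each contributing $K[x]$), we get
\[
\mathrm{ML}(A)=\bigcap_{\psi\in exp(A)}A^{\psi}=A\cap\Big(\bigcap_{\psi\ \text{non-trivial}}A^{\psi}\Big)=A\cap K[x]=K[x],
\]
where the intersection over non-trivial $\psi$ is non-empty by the previous paragraph. Likewise, since the ring of invariants of each non-trivial exponential map equals $K[x]$, the subalgebra of $A$ generated by the union of all these rings of invariants is just $K[x]$ itself, so $\mathrm{DK}(A)=K[x]$.

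There is no genuine obstacle here: the only point requiring care is that both invariants are computed over the set of \emph{non-trivial} exponential maps, which is guaranteed non-empty by Lemma~\ref{l3.2}(i); the substantive content is entirely contained in Theorem~\ref{mtheo}. \qed
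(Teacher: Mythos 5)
Your proposal is correct and is essentially the paper's own argument: the paper derives Corollary~\ref{mtheinv} directly from Theorem~\ref{mtheo} (every non-trivial exponential map has ring of invariants $K[x]$) together with Lemma~\ref{l3.2}(i) (such a map exists), exactly as you do. Your extra care about the trivial map contributing $A$ to the intersection and the non-emptiness of the set of non-trivial maps is a harmless, correct elaboration of the same route.
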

			\section{Isomorphisms of Danielewski surfaces}
			\indent
			
			In this section we characterize isomorphisms of General Danielewski surfaces. Let $K$ be any field of arbitrary characteristic. Let us consider two surfaces $A_1$ and $A_2$, where 
			$$A_1=\frac{K[X,Y,Z]}{(f(X)Y-\varphi_1{(X,Z)})}$$
			and 
			$$A_2=\frac{K[X,Y,Z]}{(g(X)Y-\varphi_2{(X,Z)})}$$
			such that $\varphi_1$, $\varphi_2$ are monic in $Z$ with $d_1:={\rm deg}_Z(\varphi_1)\geq 2$, $d_2:={\rm deg}_Z(\varphi_2)\geq 2$ and ${\rm deg}(f)$, ${\rm deg}(g)\geq 2$. Let $x_i$, $y_i$, $z_i$ denote the images of $X,Y,Z$ in $A_i$  respectively for $i=1,2$. 
			\medskip
			\begin{thm}\label{t6.1}
				Let $T: A_1\longrightarrow A_2$ be a $K$-algebra isomorphism. Let $f(x_1)={p_1}^{\alpha_1}...{p_n}^{\alpha_n}$ and $ {g(x_2)={q_1}^{\beta_1}...{q_m}^{\beta_m}}$ be the prime factorizations of $f,g$ in $K[x_1], K[x_2]$ respectively, where $\alpha_i$, $\beta_j\geq 1$ for all $i,j$. Then the following hold:
				\begin{enumerate}
					\smallskip
				\item[{\rm(i)}] $T(x_1)=\lambda x_2+\mu$,  $T(z_1)=\gamma z_2+\delta $  for some $\lambda,\gamma \in K^*$, $\mu \in K$, and $\delta \in K[x_2]$. Moreover $n=m$.
		             \smallskip
				\item[{\rm(ii)}]  $T(p_i)=\mu_{ij} q_j $ for some $1\leq j\leq n$, $ \mu_{ij}\in K^*$ for each $i$, $1\leq i\leq n$ and $\alpha_i=\beta_j$. 
				     \smallskip
				 \item[{\rm(iii)}] $T(f(x_1))=ug(x_2)$ for some unit $u$ in $K$.
				     \smallskip
				 \item[{\rm(iv)}] $d_1=d_2$.
				  \smallskip
				  \item[{\rm(v)}]  $T((f(x_1),\varphi_1(x_1,z_1))K[x_1,z_1])=(g(x_2),\varphi_2(x_2,z_2))K[x_2,z_2]$. 
				   \item[{\rm(vi)}]
				   \smallskip
				   $T(\varphi_1(x_1,z_1))=\varphi_1(\lambda x_2+\mu,\gamma z_2+\delta)=\gamma^{d_2} \varphi_2(x_2,z_2)+g(x_2)\theta(x_2,z_2)$ for some $\theta \in K[x_2,z_2]$ with ${\rm deg}_{z_2}(\theta)\leq d_2-1$.
				      \smallskip
				 \item[{\rm(vii)}] $T(y_1)=u^{-1}\gamma^{d_2} y_2+ \nu $ where $ \nu= u^{-1}\theta$. 
				   \end{enumerate}
			
		\medskip
		    
				
			\end{thm}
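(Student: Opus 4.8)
The plan rests on the invariant $\mathrm{ML}(A_i)=K[x_i]$ from Corollary \ref{mtheinv} together with the localization $S_i^{-1}A_i=K(x_i)[z_i]$ of Lemma \ref{l3.1}(iii). First I would pin down $T$ on the distinguished generators. An isomorphism carries Makar--Limanov invariants to Makar--Limanov invariants, so $T(K[x_1])=K[x_2]$; hence $T(x_1)$ is a $K$-algebra generator of $K[x_2]$ and therefore $T(x_1)=\lambda x_2+\mu$ with $\lambda\in K^{*}$, $\mu\in K$. Then $T$ maps $S_1:=K[x_1]\setminus\{0\}$ onto $S_2:=K[x_2]\setminus\{0\}$, so it extends to an isomorphism $K(x_1)[z_1]=S_1^{-1}A_1\xrightarrow{\sim}S_2^{-1}A_2=K(x_2)[z_2]$ taking $K(x_1)$ onto $K(x_2)$; since this isomorphism takes $K(x_1)[z_1]$ onto $K(x_2)[z_2]$, the image $T(z_1)$ must be a degree-one polynomial in $z_2$ over $K(x_2)$, say $T(z_1)=\gamma z_2+\delta$ with $\gamma\in K(x_2)^{*}$, $\delta\in K(x_2)$. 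But $T(z_1)\in A_2\cap\bigl(K(x_2)\oplus K(x_2)z_2\bigr)=K[x_2]\oplus K[x_2]z_2$ by Lemma \ref{l3.2}(ii), so $\gamma,\delta\in K[x_2]$; applying the same reasoning to $T^{-1}$ and composing forces $\gamma$ to be a unit of $K[x_2]$, i.e.\ $\gamma\in K^{*}$. This gives (i) except for the equality $n=m$, which I postpone; and since $x_2,z_2$ are polynomials in $T(x_1),T(z_1)$, it also follows that $T$ restricts to an isomorphism $K[x_1,z_1]\xrightarrow{\sim}K[x_2,z_2]$.

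Next I would transport the defining relation. Extending $T$ to fraction fields, $T(y_1)=T(\varphi_1(x_1,z_1))/T(f(x_1))=P_1/F_1$, where $F_1:=f(\lambda x_2+\mu)\in K[x_2]$ and $P_1:=\varphi_1(\lambda x_2+\mu,\gamma z_2+\delta)\in K[x_2,z_2]$ has $\deg_{z_2}P_1=d_1$ with leading coefficient $\gamma^{d_1}$; in particular $\deg_{z_2}(T(y_1))=d_1$. Expanding $T(y_1)\in A_2$ by Lemma \ref{l3.1}(iv) applied to $A_2$ gives $T(y_1)=\sum_i e_i(x_2,z_2)y_2^{i}$ with $\deg_{z_2}e_i\le d_2-1$; since $\deg_{z_2}y_2=d_2$ the summands occupy pairwise disjoint ranges of $z_2$-degree, so no top-degree cancellation occurs and $\deg_{z_2}(T(y_1))=\deg_{z_2}e_{n_0}+n_0d_2$ for the top index $n_0$. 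As $n_0\ge1$ (otherwise $T(y_1)\in K[x_2,z_2]$, hence $y_1\in K[x_1,z_1]$, which is false since $f$ is nonconstant), this yields $d_2\le d_1$, and the symmetric argument for $T^{-1}\colon A_2\to A_1$ gives $d_1\le d_2$. Hence $d_1=d_2=:d$, which is (iv), and therefore $n_0=1$, so $T(y_1)=e_0(x_2,z_2)+e_1(x_2)y_2$ with $e_1\in K[x_2]\setminus\{0\}$ and $\deg_{z_2}e_0\le d-1$.

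Now I would identify $f$ with $g$. Substituting $T(y_1)=P_1/F_1$ and $y_2=\varphi_2/g$ into $T(y_1)=e_0+e_1y_2$ and clearing denominators gives the identity $gP_1=gF_1e_0+F_1e_1\varphi_2$ in $K[x_2,z_2]$; comparing the coefficients of $z_2^{d}$ (using that $\varphi_2$ is monic in $Z$ and $\deg_{z_2}e_0<d$) yields $F_1e_1=\gamma^{d}g$, hence $F_1\mid g$ in $K[x_2]$. The mirror of this divisibility for $T^{-1}$ reads $T^{-1}(g(x_2))\mid f(x_1)$ in $K[x_1]$, which upon applying $T$ becomes $g(x_2)\mid F_1$; so $F_1$ and $g$ are associates, $F_1=ug$ with $u\in K^{*}$, and comparing leading coefficients of the monic polynomials $f,g$ gives $\deg f=\deg g$ and $u=\lambda^{\deg f}$. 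This is (iii), and then $e_1=\gamma^{d}/u\in K^{*}$. The remaining assertions follow by unwinding the single identity above: $gP_1=gF_1e_0+F_1e_1\varphi_2$ together with $F_1=ug$ gives $P_1=\gamma^{d}\varphi_2+g\theta$ with $\theta:=ue_0$, $\deg_{z_2}\theta\le d-1$, which is (vi); hence $T(y_1)=u^{-1}\gamma^{d}y_2+\nu$ with $\nu:=u^{-1}\theta=e_0$, which is (vii); applying the isomorphism $T\colon K[x_1,z_1]\xrightarrow{\sim}K[x_2,z_2]$ to $(f(x_1),\varphi_1(x_1,z_1))K[x_1,z_1]$ gives $(F_1,P_1)K[x_2,z_2]=(g,\varphi_2)K[x_2,z_2]$ (since $(F_1)=(g)$ and $P_1\equiv\gamma^{d}\varphi_2\pmod{g}$), which is (v); and reading $F_1=ug$ through unique factorization in $K[x_2]$ — each $p_i(\lambda x_2+\mu)$ and each $q_j$ being irreducible — forces $n=m$ (completing (i)) and, after matching factors, $T(p_i)=p_i(\lambda x_2+\mu)=\mu_{ij}q_j$ with $\mu_{ij}\in K^{*}$ and $\alpha_i=\beta_j$, which is (ii).

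I expect the main obstacle to be the degree bookkeeping in the middle paragraph: extracting enough control over the expansion of $T(y_1)$ (and, via $T^{-1}$, of $y_1$) in the normal form of Lemma \ref{l3.1}(iv) to force $d_1=d_2$ and $n_0=1$. Once that is secured, and with the two-sided divisibility $F_1\mid g$, $g\mid F_1$ in hand, everything else is formal manipulation of the identity $gP_1=gF_1e_0+F_1e_1\varphi_2$ and of unique factorization in $K[x_2]$.
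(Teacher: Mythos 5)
Your proof is correct, and while its opening coincides with the paper's (part (i) for $T(x_1)$ and $T(z_1)$ via the Makar--Limanov invariant, the localization of Lemma \ref{l3.1}(iii) and Lemma \ref{l3.2}(ii), then $T(K[x_1,z_1])=K[x_2,z_2]$), the way you handle the factorization statements is genuinely different from the paper's. The paper proves (i)--(iii) first: it applies the auxiliary exponential map $\psi$ on $A_2$ with $\psi(z_2)=z_2+g(x_2)U$ to the transported relation and compares $U^{d_1}$-coefficients to see that each $T(p_i)$ is an associate of some $q_j$, then pins down the multiplicities $\alpha_i=\beta_i$ through the ideal computation $fA\cap K[x,z]=(f,\varphi)K[x,z]$ of Lemma \ref{l3} together with monicity of $\varphi_1$; only afterwards does it prove (iv), since its key displayed relation $ug(x_2)T(y_1)=\varphi_1(\lambda x_2+\mu,\gamma z_2+\delta)$ presupposes (iii), and it obtains (v) from (iii) and Lemma \ref{l3}, with (vi) deduced from (v). You instead prove (iv) first --- your $z_2$-degree count in $K(x_2)[z_2]$ is essentially the paper's ``no cancellation among the $e_iy_2^i$'' comparison of canonical forms, but it needs only $T(y_1)=P_1/F_1$ with $F_1=f(\lambda x_2+\mu)$, $P_1=\varphi_1(\lambda x_2+\mu,\gamma z_2+\delta)$, not (iii) --- and then extract the two-sided divisibility $F_1\mid g$ and $g\mid F_1$ by comparing $z_2^{d}$-coefficients in the cleared-denominator identity $gP_1=gF_1e_0+F_1e_1\varphi_2$ and in its mirror under $T^{-1}$, so that (iii), $n=m$ and (ii) all drop out of unique factorization in $K[x_2]$, while (vi), (vii) and (v) follow by formal manipulation of that same identity, with no appeal to $\psi$ or to Lemma \ref{l3}. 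Your route is more elementary and self-contained and it untangles the dependence of (iv) on (iii); the paper's route makes the role of the $\mathbb{G}_a$-action and the ideal-theoretic statement (v) more visible, and its Lemma \ref{l3} computation is reused in the equivalences of Theorem \ref{t4.2}. One small point: your assertion that $y_1\notin K[x_1,z_1]$ ``since $f$ is nonconstant'' should invoke monicity of $\varphi_1$ in $z_1$ (so $f\nmid\varphi_1$ in the polynomial ring $K[x_1,z_1]$); with that one-line justification the argument is complete.
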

		
            \begin{proof}
            	\smallskip
            	    By Corollary \ref{mtheinv}, we get that $T(K[x_1])=K[x_2]$ and hence $T(x_1)=\lambda x_2+\mu$ for some $\lambda \in K^*$ and $\mu\in K$. Extending this isomorphism to the localization of $A_1$ at $K(x_1)\setminus \{0\}$, from part (iii) of Lemma \ref{l3.1}, we have $K(T(x_1))[T(z_1)]=K(x_2)[z_2]$. Hence we get $T(z_1)=\gamma z_2+\delta $ for some $\gamma ,\delta \in K(x_2)$. By Lemma \ref{l3.2} (ii), $\gamma ,\delta \in K[x_2]$. Similarly using $T^{-1}$, we conclude that $\gamma$ is a unit in $K$. In particular, we have $$ T(K[x_1,z_1])=K[x_2,z_2].$$ Now there is an exponential map $\psi$ on $A_2$ such that $\psi(x_2)=x_2$, $ \psi(z_2)=z_2+g(x_2)U$. From Theorem \ref{mtheo}, we get that $A_2^{\psi}= K[x_2]=T(K[x_1])$. Then applying $\psi$ on both side of the the relation $T(f(x_1))T(y_1)=\varphi_1(T(x_1),T(z_1))$ and comparing the coefficients of $U^{d_1}$ on both sides we get, we get that $T(f(x_1))\psi^{d_1}(T(y_1))=\gamma^{d_1}g(x_2)^{d_1}$. Thus we get that for each prime factor $p_i$ of $f$ in $K[x_1]$, $T(p_i)$ is one of the prime factors $q_1,q_2,...,q_n $ of $g$ in $K[x_2]$ i.e $T(p_i)=\mu_{ij} q_j$ for some $1\leq j\leq n$, and $\mu_{ij}\in K^*$. And similarly using the isomorphism $T^{-1} $ from $A_2$ to $A_1$ we get that number of distinct prime factors of $f(W),g(W)$ in $K[W]\cong K^{[1]}$ are same i.e $n=m$. This proves part (i) of the theorem.
            	  
            	  \indent
            	   Now we may write ${g(x_2)={q_1}^{\beta_1}...{q_n}^{\beta_n}}$ such that $T(p_i)=q_i$ for all $i$. Suppose if possible, $\alpha_1 >\beta_1$. As $T(K[x_1,z_1])=K[x_2,z_2]$, we get that $T({p_1}^{\alpha_1}A_1\cap K[x_1,z_1])={q_1}^{\alpha_1} A_2\cap K[x_2,z_2]$. Using Lemma \ref{l3}, we get \[T(({p_1}^{\alpha_1},\varphi_1(x_1,z_1))K[x_1,z_1])=({q_1}^{\alpha_1},{q_1}^{\alpha_1-\beta_1}\varphi_2(x_2,z_2))K[x_2,z_2].\] Thus we get that $T(\varphi_1(x_1,z_1))=\varphi_1(\lambda x_2+\mu, \gamma z_2+\delta)\in (q_1)K[x_2,z_2]$. This contradicts the fact that $\varphi_1$ is monic in $z_1$. Hence $\alpha_1\leq \beta_1$. Similarly we get that $\beta_1\leq \alpha_1$ and so $\alpha_1=\beta_1$. Using the same argument for all $i$, $1\leq i \leq n$, we get that $\alpha_i=\beta_i$. This proves (ii). And (iii) follows directly from (i) and (ii).
            	   
            	  \indent
            	  Now we prove that if $A_1$ and $A_2$ are isomorphic then $d_1=d_2$. Without loss of generality we may assume that $d_1 < d_2$. From part (i) and (iii), we get  
            	  
            	  \begin{equation}
            	  ug(x_2)T(y_1)=\varphi_1(\lambda x_2+\mu,\gamma z_2+\delta)	
            	  \end{equation}

            	   \medskip
            	  Let 
            	  \begin{equation}
            	  	T(y_1)=g_0(x_2,z_2)+g_1(x_2,z_2)y_2+...+g_l(x_2,z_2){y_2}^l
            	  \end{equation}
            	   
            	  be the unique expression of $T(y_1) $ in $A_2$ where ${\rm deg}_{z_2}(g_i)\leq d_2-1 $ for all $0\leq i\leq l$. As $T$ is an isomorphism and $T(K[x_1,z_1])=K[x_2,z_2]$ we must have $l\geq 1$. As $d_1<d_2$, right side of (2) has no $y_2$ term but left side of (2) has $y_2$ term. But this is a contradiction. Thus $d_1\geq d_2.$  Similarly we can show that $d_2\geq d_1$. Hence $d_1=d_2=d$(say) and (iv) is proved.
            	  
            	  \smallskip
            	   (v) follows from (iii) and Lemma \ref*{l3}.
            	   \smallskip
            	   
            	   It follows from (v)  that,
            	   \begin{equation}
            	   	\varphi_1(\lambda x_2+\mu,\gamma z_2+\delta)=\zeta \varphi_2(x_2,z_2)+g(x_2)\theta'(x_2,z_2)\text{ for some $\zeta$, $\theta' \in K[x_2,z_2]$.}
            	   \end{equation}
            	  
            	  But since $\varphi_2$ is monic in $z_2$ and $d_1=d_2=d$ we get $$ \zeta-\gamma^d\in (g(x_2))K[x_2,z_2].$$  
            	  Thus replacing $\zeta$ with $\gamma^d$ in (4), we get there exists $\theta \in K[x_2,z_2]$ such that  
            	  \[\varphi_1(\lambda x_2+\mu,\gamma z_2+\delta)=\gamma^{d_2} \varphi_2(x_2,z_2)+g(x_2)\theta(x_2,z_2). \]
            	  And it is clear that  ${\rm deg}_{z_2}(\theta)\leq d_2-1$. This proves part (vi).

            	  
            	  \indent
            	  Now from (iv) and equation (2) above it is easy to see that $l=1$ in equation (3). Thus after expanding (2) we get,
            	  \begin{equation}
            	  	ug(x_2)(g_0(x_2,z_2)+g_1(x_2,z_2)y_2)=\gamma^{d} \varphi_2(x_2,z_2)+g(x_2)\theta(x_2,z_2)=g(x_2)(\gamma^{d} y_2+\theta).
            	  \end{equation} 
            	  Now as  ${\rm deg}_{z_2}(\theta)\leq d-1$, from (5) it follows that $g_0(x_2,z_2)=u^{-1}\theta$ and $ g_1(x_2,z_2)=u^{-1}\gamma^d$. Now taking $\nu=g_0$, we get $T(y_1)=u^{-1}\gamma^{d} y_2+ \nu $ such that $ \nu= u^{-1}\theta$. This proves (vii).

            	  \indent
            	  
             
            	
            \noindent	 

            \end{proof}
            \indent
             \begin{thm}\label{t4.2}
            	Let $A_1$, $A_2$ be as in Theorem \ref{t6.1}. Let $T: A_1\longrightarrow A_2$ be a homomorphism such that  {\rm(i)},  {\rm(iv)} of Theorem \ref{t6.1} are satisfied then the following are equivalent:
            		\begin{enumerate}
            		\smallskip
            		\item[{\rm(I)}] T is an isomorphism.
            		\smallskip
            		\item[{\rm(II)}] $T((f(x_1),\varphi_1(x_1,z_1))K[x_1,z_1])=(g(x_2),\varphi_2(x_2,z_2))K[x_2,z_2]$.
            		\smallskip
            		\item[{\rm(III)}]  $T(f(x_1))=ug(x_2)$ for some unit $u$ in $K$.
            		
            	\end{enumerate} 
            \end{thm}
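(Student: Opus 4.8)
The strategy is to prove the cycle of implications $(I)\Rightarrow(II)\Rightarrow(III)\Rightarrow(I)$. The implication $(I)\Rightarrow(II)$ (and likewise $(I)\Rightarrow(III)$) is free: if $T$ is an isomorphism then Theorem \ref{t6.1} applies and its parts (v) and (iii) are precisely $(II)$ and $(III)$. The content is therefore in $(II)\Rightarrow(III)$ and $(III)\Rightarrow(I)$, and before treating these I would record one computation valid under the standing hypotheses (i), (iv) on $T$. Applying $T$ to the defining relation $f(x_1)y_1=\varphi_1(x_1,z_1)$ of $A_1$ and using (i) gives, in $A_2$,
\[
T(f(x_1))\,T(y_1)=\varphi_1(\lambda x_2+\mu,\gamma z_2+\delta)=:\Phi(x_2,z_2),
\]
where $\Phi\in K[x_2,z_2]$ has $z_2$-degree $d_1$ with leading coefficient $\gamma^{d_1}$. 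Writing $d:=d_1=d_2$ (hypothesis (iv)) and using $\varphi_2(x_2,z_2)=g(x_2)y_2$ in $A_2$ to replace $z_2^{d}$ by $g(x_2)y_2$ minus a polynomial in $x_2,z_2$ of $z_2$-degree $<d$, one rewrites $\Phi=\gamma^{d}g(x_2)y_2+\Psi$ with $\Psi\in K[x_2,z_2]$, ${\rm deg}_{z_2}\Psi\le d-1$. Now expand $T(y_1)=\sum_j h_j(x_2,z_2)y_2^{j}$ in the normal form of Lemma \ref{l3.1}(iv) (${\rm deg}_{z_2}h_j\le d-1$). Since $T(f(x_1))=f(\lambda x_2+\mu)$ lies in $K[x_2]$ and hence involves no $z_2$, the expression $\sum_j T(f(x_1))h_j\,y_2^{j}$ is itself in normal form, so comparing it with $\gamma^{d}g(x_2)y_2+\Psi$ and using uniqueness yields $h_j=0$ for $j\ge 2$, $\;T(f(x_1))h_1=\gamma^{d}g(x_2)$ with $h_1\in K[x_2]$, and $T(f(x_1))h_0=\Psi$. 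In particular $T(f(x_1))$ divides $g(x_2)$ in $K[x_2]$, so ${\rm deg}\,f\le{\rm deg}\,g$.

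For $(II)\Rightarrow(III)$: by (i) the elements $T(x_1)=\lambda x_2+\mu$ and $T(z_1)=\gamma z_2+\delta$ are algebraically independent over $K$, so $T$ restricts to an injective $K$-algebra map $K[x_1,z_1]\to K[x_2,z_2]$; its image contains $x_2=\lambda^{-1}(T(x_1)-\mu)$ and then $z_2=\gamma^{-1}(T(z_1)-\delta(x_2))$, hence is all of $K[x_2,z_2]$, so $T|_{K[x_1,z_1]}$ is an isomorphism. By $(II)$ it carries $(f(x_1),\varphi_1(x_1,z_1))K[x_1,z_1]$ onto $(g(x_2),\varphi_2(x_2,z_2))K[x_2,z_2]$ and therefore induces an isomorphism $K[x_1,z_1]/(f,\varphi_1)\cong K[x_2,z_2]/(g,\varphi_2)$. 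Since $f,g$ are monic and $\varphi_1,\varphi_2$ are monic in $z$, these quotients are free $K$-modules of ranks $({\rm deg}\,f)d_1$ and $({\rm deg}\,g)d_2$; equating ranks and using (iv) gives ${\rm deg}\,f={\rm deg}\,g$. Together with $T(f(x_1))h_1=\gamma^{d}g(x_2)$ from the preliminary computation this forces $h_1$ to be a nonzero constant, so $T(f(x_1))=u\,g(x_2)$ for some $u\in K^{*}$, i.e.\ $(III)$ holds.

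For $(III)\Rightarrow(I)$: assuming $T(f(x_1))=u\,g(x_2)$ with $u\in K^{*}$ we get ${\rm deg}\,f={\rm deg}\,g$, so $h_1$ is a nonzero constant by the preliminary computation, and $T(y_1)=h_1y_2+h_0(x_2,z_2)$ with $h_1\in K^{*}$, $h_0\in K[x_2,z_2]$. As in the previous paragraph $x_2$ and $z_2$ lie in the image of $T$, hence so does $h_0(x_2,z_2)$, and therefore so does $y_2=h_1^{-1}(T(y_1)-h_0(x_2,z_2))$; thus $T$ is surjective. Finally $T$ is a surjective homomorphism of affine $K$-domains with ${\rm trdeg}_K A_1={\rm trdeg}_K A_2=2$ (Lemma \ref{l3.1}(i)), so $\ker T$ is a prime $\mathfrak{p}$ of $A_1$ with ${\rm trdeg}_K(A_1/\mathfrak{p})=2={\rm trdeg}_K A_1$, which forces $\mathfrak{p}=0$; hence $T$ is an isomorphism, completing the cycle.

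The step I expect to be the main obstacle is the normal-form bookkeeping in the preliminary computation: one must verify that comparing the $y_2$-coefficients of $T(f(x_1))T(y_1)$ with those of $\Phi$ is legitimate — this rests on $T(f(x_1))$ being a polynomial in $x_2$ alone, so that multiplying by it preserves the normal form of Lemma \ref{l3.1}(iv) — and that the reduction of $\Phi$ against $\varphi_2$ uses exactly the equality $d_1=d_2$ supplied by hypothesis (iv). Granting that, the rank count in $(II)\Rightarrow(III)$ and the image computation in $(III)\Rightarrow(I)$ are routine.
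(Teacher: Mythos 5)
Your proof is correct, and its skeleton matches the paper's: (I)$\Rightarrow$(II) is quoted from Theorem \ref{t6.1}, and (III)$\Rightarrow$(I) is proved exactly as in the paper, by expanding $T(y_1)$ in the normal form of Lemma \ref{l3.1}(iv), deducing $T(y_1)=u^{-1}\gamma^{d}y_2+h_0$ with $h_0\in K[x_2,z_2]$, getting surjectivity from hypothesis (i), and then injectivity from equality of transcendence degrees (the paper phrases this as ``Noetherian domains of the same dimension''). Where you genuinely diverge is (II)$\Rightarrow$(III). The paper writes $T(f(x_1))=\zeta'g+\tau\varphi_2$, kills $\tau$ modulo $g$ using that $\varphi_2$ is monic in $z_2$, and then concludes the remaining cofactor $\zeta\in K[x_2]$ is a unit by appealing to $n=m$ from hypothesis (i); you instead exploit the full ideal \emph{equality} in (II): since $T$ restricts to an isomorphism $K[x_1,z_1]\to K[x_2,z_2]$, (II) induces an isomorphism of the finite $K$-algebras $K[x_1,z_1]/(f,\varphi_1)\cong K[x_2,z_2]/(g,\varphi_2)$, whose $K$-dimensions $(\deg f)d_1$ and $(\deg g)d_2$ together with (iv) give $\deg f=\deg g$; combined with the identity $T(f(x_1))h_1=\gamma^{d}g(x_2)$, $h_1\in K[x_2]$, from your preliminary normal-form comparison, this forces $h_1\in K^{*}$ and yields (III). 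This alternative buys two things: the explicit degree equality $\deg f=\deg g$, which is precisely what makes the cofactor of $g$ constant (the paper's appeal to $n=m$ leaves that inference more implicit, since equal numbers of distinct prime factors alone do not bound degrees), and self-containedness, since your single coefficient comparison replaces the paper's references back to equations (2)--(3) inside the proof of Theorem \ref{t6.1} and to Lemma \ref{l3}. The delicate point you flagged is indeed the one to keep explicit: the comparison of $y_2$-coefficients is legitimate because $T(f(x_1))\in K[x_2]$, so multiplication by it preserves the bound $\deg_{z_2}\le d-1$ and the uniqueness statement of Lemma \ref{l3.1}(iv) applies, and the rewriting of $\Phi$ uses $d_1=d_2$ from (iv).
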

            \begin{proof}
            	(I) $\Rightarrow$ (II) Follows from Theorem \ref{t6.1}.
            	
            	\smallskip
            	(II) $\Rightarrow$ (III)  It is clear from (i), that $T$ is an isomorphism from $k[x_1,z_1]$ to $K[x_2,z_2]$. Now (II) implies
            	\begin{equation}
            		T(f(x_1))=\zeta' g(x_2)+ \tau\varphi_2(x_2,z_2) \text{ for some $\zeta$ and $\tau$ $\in$ $K[x_2,z_2]$. }
            	\end{equation}
            	
            	But from (i), $T(f(x_1))\in K[x_2]$ and $\varphi_2$ is monic in $z_2$, so $\tau \in (g(x_2))K[x_2,z_2]$ and hence we get from (6), \[ T(f(x_1))=\zeta g(x_2) \text{  for some $\zeta\in K[x_2,z_2]$}.\] From (i) $n=m$,  and $T(f(x_1))\in K[x_2]$, so $\zeta \in K$ is a unit. Hence  $T(f(x_1))=ug(x_2)$ for some unit $u\in K$.
            	
            	\smallskip(III) $\Rightarrow$ (I)  As $A_1$, $A_2$ are Noetherian domains of same dimension, it is enough to show that $T$ is surjective.
            	 From equation (2) and (iv), it follows that $l=1$ in equation (3).
            	 After expanding (2), we get, 
            	 
            	    \[
            	 		ug(x_2)(g_0(x_2,z_2)+g_1(x_2,z_2)y_2)=\gamma^{d} \varphi_2(x_2,z_2)+ \theta'(x_2,z_2) 
            	 	\]
            	  for some $\theta' \in K[x_2,z_2]$ with ${\rm deg_{z_2}}(\theta')\leq d-1$.\\
            	 From Lemma \ref{l3}, $\theta'\in (g(x_2),\varphi_2(x_2,z_2))K[x_2,z_2]$. As $\varphi_2$ is monic in $Z$, it follows $\theta'=(g(x_2))K[x_2,z_2]$. Now using same arguments used to prove (vii) of Theorem \ref{t6.1}, we get that  
            	\[T(y_1)=u^{-1}\gamma^{d} y_2+ u^{-1}\theta    \text{    for some $\theta$ in $K[x_2,z_2]$ }.\]
            	This proves that $y_2$ is in the image of $T$ and hence $T$ is surjective. Thus $T$ is an isomorphism. 
            	
            \end{proof}
            
            Let $B=\frac{F[X,Y,Z]}{(h(X)Y-\eta(Z))}$, where $F$ is an algebraically closed field of characteristic $0$, ${\rm deg}(h)\geq 2$, ${\rm deg}(\eta)\geq2$ and $h$ has at least one non-zero root. Then for any automorphism $T$ of $B$, in \cite{veloso}, Bianchi and Veloso have proved that $T(x)=\lambda x$ for some $\lambda \in F^*$, where $x$ is the image of $X$ in $B$. Over an arbitrary field $K$ of arbitrary characteristic, we have the following corollary of Theorem \ref{t6.1} giving a similar result. 	
        \begin{cor}\label{c6}
        	\smallskip
        	 Let $K$ be any field of arbitrary characteristic and $A=\frac{K[X,Y,Z]}{(f(X)Y-\varphi{(X,Z)})}$ where $\varphi$ is monic in $Z$ and ${\rm deg}_Z(\varphi)\geq2$. Suppose, $f(X)=X^ng(X)$ where $n\geq2$ and $g$ has no root of multiplicity $n$. Then for any automorphism $T$ of $A$, $T(x)=\lambda x$ for some nonzero constant $\lambda$, where $x$ is the image of $X$ in $A$.
          \end{cor}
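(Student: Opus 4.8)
The plan is to apply Theorem~\ref{t6.1} to the automorphism $T$, specialised to the case $A_1=A_2=A$, so that the single polynomials $f$ and $\varphi$ play the roles of both $f,g$ and both $\varphi_1,\varphi_2$ appearing there. Part~(i) of that theorem then gives at once $T(x)=\lambda x+\mu$ for some $\lambda\in K^{*}$ and $\mu\in K$, so the whole content of the corollary reduces to the assertion $\mu=0$; parts~(iii)--(vii) of Theorem~\ref{t6.1} will not be needed.

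The heart of the matter is to single out the root $0$ of $f$. I would write $f(X)=X^{n}g(X)$ with $X\nmid g$, so that the prime $x$ of $K[x]\cong K^{[1]}$ occurs in $f$ with multiplicity exactly $n$, while, by the hypothesis that $g$ has no root of multiplicity $n$, no irreducible factor of $f$ over $K$ other than a scalar multiple of $x$ occurs in $f$ with multiplicity $n$. Hence $x$ is, up to a unit of $K$, the \emph{unique} prime factor of $f$ of multiplicity $n$ (here both prime factorisations of Theorem~\ref{t6.1} are just that of $f$ in $K[x]$, since $A_1=A_2$). Applying part~(ii) of Theorem~\ref{t6.1} to this prime factor, $T(x)$ is a nonzero scalar multiple of a prime factor of $f$ of the same multiplicity $n$; by the uniqueness just noted, that prime factor is again a scalar multiple of $x$, so $T(x)=c\,x$ for some $c\in K^{*}$. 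Comparing with $T(x)=\lambda x+\mu$ forces $\mu=0$ (and $c=\lambda$), which is the claim.

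I do not expect any genuine obstacle here: the corollary is essentially a bookkeeping consequence of Theorem~\ref{t6.1}, and the only point requiring care is exactly that bookkeeping, namely isolating $x$ among the prime factors of $f$, for which the multiplicity hypothesis on $f=X^{n}g$ is precisely what is needed. That hypothesis cannot be dropped: for example when $f=X^{3}(X-1)^{3}$, where $0$ and $1$ are roots of the same multiplicity, the affine substitution $X\mapsto 1-X$ extends, for a suitably symmetric $\varphi$, to an automorphism of $A$ with $\mu\neq 0$.
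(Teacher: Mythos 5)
Your overall route is the intended one: the corollary is meant to be read off from Theorem \ref{t6.1}, exactly as you do, using part (i) to get $T(x)=\lambda x+\mu$ and part (ii) to pin down $\mu=0$; your normalisation $X\nmid g$ is also the correct reading of the statement (it is what the paper's Examples use). However, there is a genuine gap in the step where you claim that the hypothesis ``$g$ has no root of multiplicity $n$'' implies that no irreducible factor of $f$ over $K$ other than (a scalar multiple of) $x$ occurs with multiplicity $n$. This inference identifies the multiplicity of an irreducible factor with the multiplicity of its roots, which is only valid for separable factors. Over an imperfect field of characteristic $p$ an inseparable irreducible factor $q$ of $g$ may occur with multiplicity exactly $n$ while each of its roots in $\overline{K}$ has multiplicity $np^{e}\neq n$: for instance $K=\mathbb{F}_p(t)$, $g(X)=(X^p-t)^n$, $f=X^n(X^p-t)^n$ satisfies all hypotheses of the corollary (the only root of $g$ is $t^{1/p}$, of multiplicity $np$), yet $x$ and $x^p-t$ are two distinct prime factors of $f$ of multiplicity $n$. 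So the uniqueness you invoke is false in the generality claimed by the corollary (``any field of arbitrary characteristic''), and Theorem \ref{t6.1}(ii) alone does not yet tell you that $T(x)$ is a multiple of $x$.

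The gap is easy to close, and you should make the repair explicit: by Theorem \ref{t6.1}(i), $T(x)=\lambda x+\mu$ has degree $1$ in $x$, so the prime factor $q_j$ of $f$ with $T(x)=\mu_{ij}q_j$ furnished by part (ii) must be linear, say a scalar multiple of $x-a$ with $a\in K$, occurring in $f$ with multiplicity $n$. A linear factor is separable, so $a$ is a root of $f$ of multiplicity exactly $n$; if $a\neq 0$ its multiplicity in $f$ equals its multiplicity in $g$, contradicting the hypothesis on $g$. Hence $a=0$, $q_j$ is a scalar multiple of $x$, and $\mu=0$, i.e.\ $T(x)=\lambda x$. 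With this one-line degree argument inserted in place of the (false) uniqueness claim, your proof is correct and coincides with the paper's intended deduction from Theorem \ref{t6.1}; your closing remark on the necessity of the multiplicity hypothesis (e.g.\ $f=X^3(X-1)^3$ with $X\mapsto 1-X$) is sound and parallels the paper's Examples.
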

          
          \begin{cor}\label{c6.3}
          	\smallskip
          	Let $K$ be any field of arbitrary characteristic and   $A=\frac{K[X,Y,Z]}{(f(X)Y-\varphi{(X,Z)})}$  where $\varphi$ is monic in $Z$ such that ${\rm deg}_Z(\varphi)\geq2$ and $f$ is monic with at least two distinct roots. Then $A$ is never isomorphic to a surface of the form $\frac{K[X,Y,Z]}{(X^nY-Q(X,Z))}$ where $n\geq 2$ and $Q$ is monic in $Z$ with ${\rm deg}_Z(Q)\geq 2$.  
          \end{cor}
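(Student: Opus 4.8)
The strategy is to assume such an isomorphism exists and contradict Theorem \ref{t6.1}. So suppose $T\colon A\longrightarrow A'$ is a $K$-algebra isomorphism, where $A'=\frac{K[X,Y,Z]}{(X^nY-Q(X,Z))}$ with $n\ge 2$ and $Q$ monic in $Z$, ${\rm deg}_Z(Q)\ge 2$. First I would check that $A$ and $A'$ fit the set-up of Theorem \ref{t6.1}: taking $A_1=A$ (with $\varphi_1=\varphi$ and the polynomial playing the role of $f$ being $f$ itself) and $A_2=A'$ (with $g(X)=X^n$ and $\varphi_2=Q$), the polynomials $\varphi,Q$ are monic in $Z$ of $Z$-degree $\ge 2$, while ${\rm deg}(f)\ge 2$ since $f$ has at least two distinct roots, and ${\rm deg}(g)=n\ge 2$.

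Next I would extract the consequences of Theorem \ref{t6.1}. In $K[x_2]$ the element $g(x_2)=x_2^{\,n}$ has a single prime factor, so in the notation of that theorem $m=1$ with $q_1=x_2$ and $\beta_1=n$. Writing $f=p_1^{\alpha_1}\cdots p_N^{\alpha_N}$ for the prime factorization of $f$ in $K[x_1]$ (where $x_1$ is the image of $X$ in $A$), part (i) forces $N=m=1$, so $f=p_1^{\alpha_1}$ is a power of a single monic irreducible $p_1\in K[X]$. Part (ii) then yields $T(p_1)=\mu\, x_2$ for some $\mu\in K^*$ together with $\alpha_1=\beta_1=n$, and part (i) also gives $T(x_1)=\lambda x_2+\mu_0$ for some $\lambda\in K^*$ and $\mu_0\in K$.

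Finally I would compare $X$-degrees: since $T$ sends $x_1$ to $\lambda x_2+\mu_0$ with $\lambda\neq 0$, the element $T(p_1)=p_1(\lambda x_2+\mu_0)\in K[x_2]$ is a polynomial in $x_2$ of degree exactly ${\rm deg}_X(p_1)$. As $T(p_1)=\mu x_2$ has degree $1$, we conclude ${\rm deg}_X(p_1)=1$, hence $p_1=X-a$ for some $a\in K$ and $f=(X-a)^n$. But $(X-a)^n$ has only one distinct root, contradicting the hypothesis that $f$ has at least two distinct roots; therefore no such isomorphism $T$ exists.

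I do not expect a genuine obstacle here: once Theorem \ref{t6.1} is in hand the argument is essentially bookkeeping with prime factorizations. The one point requiring care is the notational collision between the exponent $n$ in $X^n$ and the count of distinct prime factors of $f$ appearing in Theorem \ref{t6.1} (renamed $N$ above); keeping these apart is what makes the degree comparison in the last step transparent.
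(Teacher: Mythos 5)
Your argument is correct and is essentially the paper's intended route: the corollary is stated as a direct consequence of Theorem \ref{t6.1}, whose parts (i)--(iii) force $T(f(x_1))=f(\lambda x_2+\mu_0)=u\,x_2^{\,n}$, i.e.\ $f=(X-a)^n$ up to the affine change of variable, contradicting the existence of two distinct roots. Your extra degree comparison $\deg_X(p_1)=1$ is a worthwhile touch, since it also rules out the case where $f$ is a power of a single $K$-irreducible of degree $\geq 2$ (two distinct roots in $\overline{K}$ but only one prime factor), so the counting $n=m$ alone would not suffice there.
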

          
      \indent
      	The following examples show that when ${\rm char}K>0$, the hypothesis ``$n\geq 2$ and $g$ has no root of multiplicity $n$" cannot be dropped from Corollary \ref{c6}. 
      	\begin{ex}
      		\rm
      		Let ${\rm char}K$=2 and $A=\frac{K[X Y,Z]}{(X(X+1)Y-Z^2)}$. Here we have $n=1$. we denote $x, y, z$ to be images of $X, Y, Z$ in $A$ respectively. Define a $k$-algebra homomorphism $T:A \longrightarrow A$ such that $T(x)= x+1$, $T(y)=y$ , $T(z)=z$. Then it is clear that $T$ is a automorphism of $A$.
      	\end{ex} 
      	\begin{ex}
      		\rm
      		Let ${\rm char}K=2$ and $B=\frac{K[X,Y,Z]}{(X^2(X+1)^2Y-Z^2)}$. Here $n=2$ but $g$ has root of multiplicity 2. Similarly here also we have a $k$-algebra automorphism $T$ of $B$ defined by $T(x)= x+1$, $T(y)=y$ , $T(z)=z$ 
      		where  $x,y,z$ denote the images of $X,Y,Z$ in $B$ respectively.
      	\end{ex}

			\section{Stable isomorphism property of Danielewski surfaces}
			\indent
			In this section using method used in \cite{ng_sen_d.danielewski} and \cite{pg_ng_gen_danielewski} we show that a subfamily of General Danielewski surfaces provides counterexamples to the cancellation problem Q\ref{q1}.
			Throughout this section 
			$$A=\frac{K[X,Y,Z]}{(f(X)Y-\varphi{(X,Z)})}$$
			 where $f$ has at least one double root, $\varphi(X,Z)=\sum C_{ij}X^iZ^j$ is monic in $Z$ such that ${\rm deg}_Z(\varphi(X,Z))\geq 2$. Suppose that  $$(\varphi(X,Z),\varphi_Z(X,Z))=K[X,Z].$$  Without loss of generality we assume that the double root of $f$ is zero i.e $f=X^ng(X)$ where $n\geq2$. Let $$h(X)=X^{n-1}g(X)$$ and  $$B=\frac{K[X,Y,Z]}{(h(X)Y-\varphi(X,Z))}.$$ Then we have the following theorem:
			\begin{thm}\label{t7.1}
				\smallskip
				$A^{[1]}\cong B^{[1]}$.  
			\end{thm}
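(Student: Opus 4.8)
Here is how I would proceed. Write $f(X)=X^n g(X)=X\,h(X)$ with $h(X)=X^{n-1}g(X)$; let $x,y,z$ be the images of $X,Y,Z$ in $A$ (so $x^n g(x)\,y=\varphi(x,z)$) and $x',y',z'$ their images in $B$ (so $h(x')\,y'=\varphi(x',z')$).

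\textbf{Step 1: realise $A$ as an affine modification of $B$.} Since $\varphi$ is monic in $Z$ we have $\gcd(h(X),\varphi(X,Z))=1$ in $K[X,Z]$, so $B$ is a domain by Lemma \ref{l1}; clearly $A$ is also a domain. The map $x'\mapsto x,\ z'\mapsto z,\ y'\mapsto xy$ is a $K$-embedding $B\hookrightarrow A$ (it respects the relation, as $h(x)\cdot xy=x^ng(x)y=\varphi(x,z)$), and via it $A=B\bigl[\tfrac{y'}{x}\bigr]$ inside $\operatorname{Frac}(B)=\operatorname{Frac}(A)$, i.e.
\[
A\;\cong\;B[V]\big/(x'V-y'),
\]
the affine modification of $\operatorname{Spec} B$ along the ideal $(x',y')$ with divisor $V(x')$.

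\textbf{Step 2: reduce to stable triviality.} Adjoining one variable to the presentation of Step 1 gives $A^{[1]}\cong B[V,S]/(x'V-y')$, so the assertion $A^{[1]}\cong B^{[1]}$ is equivalent to
\[
B[V,S]\big/(x'V-y')\;\cong\;B[S].
\]

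\textbf{Step 3: the modification data is reduced.} Because $n\ge 2$ we have $h(0)=0$, so $B/(x')=K[Y,Z]/(\varphi(0,Z))=E[y']$, where $E:=K[Z]/(\varphi(0,Z))$. From $(\varphi,\varphi_Z)=K[X,Z]$, specialising a Bézout identity $a\varphi+b\varphi_Z=1$ at $X=0$ shows $\varphi(0,Z)$ is separable; hence $E$ is a finite étale $K$-algebra, $B/(x')=E[y']$ is a reduced polynomial ring, and the image of $y'$ there is a non-zerodivisor. Consequently the two rings in Step 2 have isomorphic reductions modulo $x'$ (both $\cong E[v,s]$, a polynomial ring in two variables over $E$) and isomorphic localisations at $x'$ (both $\cong B[x'^{-1}][S]$, since $V=y'/x'$ is determined after inverting $x'$).

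\textbf{Step 4: construct the isomorphism.} Following the explicit method of \cite{ng_sen_d.danielewski} and \cite{pg_ng_gen_danielewski}, one builds a $K$-algebra isomorphism $\Theta\colon B[V,S]/(x'V-y')\to B[S]$ by glueing the isomorphism over $D(x')$ to the one modulo $x'$ from Step 3: concretely, one performs a shear of $S$ (equivalently of $z'$) by a multiple of $h(x')$ whose coefficient is assembled from the Bézout data $a,b$, chosen so that $x'V-y'$ is taken to a unit times a coordinate and so that the induced map modulo $x'$ is the relabelling $v\leftrightarrow y'$ of Step 3. Here both hypotheses enter decisively: $n\ge 2$ provides the extra factor $x'$ that lets the shear lie in $B$ after the division defining $\Theta$, and $(\varphi,\varphi_Z)=(1)$ makes $\overline{y'}$ a non-zerodivisor in the reduced ring $B/(x')$, which is exactly what is needed to trivialise the Danielewski-type glueing along $V(x')$. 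Surjectivity of $\Theta$ is then obtained from Lemma \ref{2.11} with $a=x'$, the condition $x'B[S]\cap\operatorname{Im}\Theta=x'\operatorname{Im}\Theta$ being verified by a degree computation as in Lemma \ref{l3}; since both rings are affine $K$-domains of transcendence degree $3$, a surjective $K$-homomorphism between them is an isomorphism.

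\textbf{Main obstacle.} The crux is Step 4 — writing down $\Theta$ and proving surjectivity. The difficulty is structural: the modification $B[V]/(x'V-y')=A$ is \emph{not} isomorphic to $B$ (precisely this non-isomorphism will furnish the counterexamples to cancellation), so the isomorphism $A^{[1]}\cong B^{[1]}$ cannot be induced by one between $A$ and $B$ and cannot even fix $K[x',z']$; it must genuinely exploit the extra variable to untwist the glueing along $V(x')$, and this is where the arithmetic of $f$ ($n\ge 2$) and of $\varphi$ ($(\varphi,\varphi_Z)=(1)$) is indispensable.
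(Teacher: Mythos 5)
Your Steps 1 and 2 are correct and harmless: eliminating $Y$ shows $B[V]/(x'V-y')\cong A$, so the theorem is indeed equivalent to $B[V,S]/(x'V-y')\cong B[S]$. But the proof stops there. Step 3 gives no leverage at all: having isomorphic reductions modulo $x'$ and isomorphic localisations at $x'$ is exactly the situation of $A$ and $B$ themselves (both reduce to $\bigl(K[Z]/(\varphi(0,Z))\bigr)[Y]$ and both localise to $K[x^{\pm1},z]$), and these two rings are \emph{not} isomorphic, as you yourself note. So everything rests on Step 4, and Step 4 is not a construction but a promise: ``one performs a shear of $S$ by a multiple of $h(x')$ whose coefficient is assembled from the B\'ezout data, chosen so that $x'V-y'$ is taken to a unit times a coordinate.'' No such map is written down, no verification that it is well defined or lands where claimed, and the appeals to Lemma \ref{2.11} and Lemma \ref{l3} are made without specifying the subring to which they are applied. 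Since this is precisely the content of the theorem, the proposal has a genuine gap.

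For comparison, the paper does the missing work explicitly, working in $R=A[v]=A^{[1]}$ with the exponential map $\phi(x)=x$, $\phi(z)=z+f(x)U$, extended by $\phi(v)=v-xU$. One produces invariant elements $\theta=h(x)v+z$ and $s$ with $\varphi(x,\theta)=h(x)s$ (here $n\ge 2$ enters: higher powers of $h$ are divisible by $xh$, giving $s=xy+v\varphi_Z(x,z)+xr$), then uses the B\'ezout identity $a\varphi_Z+b\varphi=1$ to show that $w=(v-s\,a(x,\theta))/x$ lies in $R$ and satisfies $\phi(w)=w-U$, whence $R=R^{\phi}[w]=(R^{\phi})^{[1]}$ by Lemma \ref{l2.4}(iii). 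The genuinely delicate point is then that $E:=K[x,\theta,s]$ is isomorphic to $B$ and equals $R^{\phi}$; this is proved via $E[x^{-1}]=R^{\phi}[x^{-1}]$ together with $xR\cap E=xE$, the latter by examining $E/xE\to R/xR$ and using that $\varphi_Z(0,\overline{Z})$ is a unit in $K[Z]/(\varphi(0,Z))$, i.e.\ exactly the hypothesis $(\varphi,\varphi_Z)=K[X,Z]$. Your sketch gestures at these ingredients (the B\'ezout data, Lemma \ref{2.11}, a degree computation) but never assembles them: until the analogues of $\theta$, $s$ and $w$ (or your map $\Theta$) are written down and the identification of the invariant ring with $B$ is verified, the theorem is not proved.
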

			\begin{proof}
				\smallskip
				Let $x,y,z$ be the images of $X,Y,Z$ in $A$ respectively.
				We have an exponential map $\phi : A\longrightarrow A[U]$ given by
				\begin{alignat*}{3}
				   &\phi(x)=x\\
					&\phi(z)=z+f(x)U\\
					&\phi(y)=\frac{\varphi(x,z+f(x)U)}{f(x)} =y+ U\alpha(x,z,U) \text{ for some $\alpha \in K^{[3]}$}.
			    \end{alignat*}

				Let $R=A[v]\cong A^{[1]} $. Then we can extend $\phi$ to an exponential map $$\phi:A[v] \longrightarrow A[v][U]$$ by defining $\phi{(v)}=v-xU$. 
				Let $\theta=hv+z$. Then $\theta \in R^{\phi}$.
				Also note that
				\begin{equation}
					\begin{split}
						\varphi(x,\theta) & =\varphi(x,hv+z)
						\\&= \sum C_{ij}x^i(hv+z)^j
						\\& =\varphi(x,z)+ h(v\varphi_z(x,z)+xr)  \text{[as higher powers of $h$ are divisible by $xh$]}
					\end{split}
				\end{equation}
				\newline
				for some $r\in R$. Thus
				
				\begin{equation}
					\label{}
					\varphi(x,\theta)=hs.
				\end{equation}
				where $s=xy+v\varphi_z(x,z)+xr$.
				
				\smallskip
				Now as $\varphi(x,\theta)\in R^{\varphi}$ and $R^{\varphi}$ is inert we get that $s\in R^{\varphi} $.
				Now from the given condition we get that there exist $a(X,Z),b(X,Z)$ such that 
				\begin{equation}
					a(X,Z)\varphi_Z(X,Z)+b(X,Z)\varphi(X,Z)=1
				\end{equation}
				
				Note that $a(x,\theta)-a(x,z)\in xR$ (as $\theta-z=hv=x^{n-1 }g(x)v$).
				\begin{equation}
					\begin{split}
						v-sa(x,\theta) & = v-a(x,\theta)[xy+v\varphi_z(x,z)+xr] 
						\\ & =v(1-\varphi_z(x,z)a(x,z))+xt     \text{     for some $t\in R$}
						\\ & =v(b(x,z)\varphi(x,z))+xt
						\\ & =vf(x)yb(x,z)+xt
						\\ & =x(vhyb(x,z)+t)
					\end{split}
				\end{equation}
				
				\smallskip
				Let $w=\frac{v-sa(x,\theta)}{x}$. Then by (11), $w\in R$. Then we get that $\phi(w)=w-U$. Thus by Lemma \ref*{l2.4}(iii), we get 
				\begin{equation}
					R=R^{\varphi}[w]=(R^{\varphi})^{[1]}. 
				\end{equation}
				
				\smallskip
				Let $E=K[x,\theta,s]$. Now we prove that $B\cong E$. Define a $K$-algebra homomorphism $\psi: K[X,Y,Z]\longrightarrow E$ by $\psi(X)=x,\psi(Y)=s,\psi(Z)=\theta$. Clearly this map is surjective. From (9), we get a surjective map $\overline{\psi}$ from $B$ to $E$. Also $2\leq $trdeg$_KE$ $\leq$ trdeg$_K(R^{\varphi})=2$, we get that $\text{dim}(E)=2$. Thus we get that $\overline{\psi}$ is an isomorphism.
				\newline
				
				Now we prove that $E=R^{\varphi}$. Clearly we have $E\subset R^{\varphi}$. Note that
				\begin{equation}
					\begin{split}
						R[x^{-1}] & =K[x^{\pm1},y,z][v]
						\\&= K[x^{\pm1},s,\theta][v]
						\\&= K[x^{\pm1},s,\theta][w]
						\\&=E[x^{-1}][w]
						\\& =R^{\varphi}[x^{-1}][w]  
					\end{split}   
				\end{equation} 
				
				\smallskip
				Hence we get that $E[x^{-1}]=R^{\varphi}[x^{-1}]$. 
				
				\smallskip
				Thus by Lemma \ref{2.11} it is enough to show that $xR^{\varphi}\cap E=xE$. As $xR\cap R^{\varphi}=xR^{\varphi} $, it is enough to show that $xR\cap E=xE$. That is to show that the kernel of the map $\pi: E\longrightarrow \frac{R}{xR}$ is $xE$. 
				
				\smallskip
				Note that from the isomorphism $\overline{\psi}$ defined above we get that 
				\begin{equation}
					\frac{E}{xE}\cong \left(\frac{K[Z_1]}{\varphi(0,Z_1)}\right)[Y_1] 
				\end{equation} 
				
				Also 
				\begin{equation}
					\frac{R}{xR}\cong\left(\frac{K[Z]}{\varphi(0,Z)}\right)[Y,v]
				\end{equation}
				
				\smallskip
				Now $\pi(\theta)=\overline{Z}$ and $\pi(s)=v\varphi_Z(0,\overline{Z})$. As $\varphi_Z(0,\overline{Z})$ is a unit in $\frac{R}{xR}$ by equation (10) we get that 
				\begin{equation}
					\pi(E)=\frac{K[Z]}{\varphi(0,Z)}[v]\cong \left(\frac{K[Z]}{\varphi(0,Z)}\right)^{[1]}
				\end{equation}
				
				From the equations (14), (15) and (16) above we see that $\pi$ induces a map 
				
				$$\overline{\pi}:\frac{E}{xE}\cong \left(\frac{K[Z_1]}{\varphi(0,Z_1)}\right)[Y_1]\longrightarrow \frac{R}{xR}\cong\frac{K[Z]}{\varphi(0,Z)}[Y,v] $$ such that $\overline{\pi}(\overline{Z_1})=\overline {Z}$ and $\overline{\pi}(\overline{Y_1})=v$ which is clearly injective.
				
				\smallskip
				Thus we get that ker($\pi$)=$xE$. And hence from (12) it follows that $R\cong E^{[1]}$ i.e $A^{[1]}\cong B^{[1]}$.  
			\end{proof}
			
			\smallskip
		Now using Corollary \ref{c6.3} and the following theorem, we get a new family of pairwise non-isomorphic surfaces which are stably isomorphic. And thus we get a new family of counter examples to the cancellation problem. 
		\begin{thm}\label{th5.2}
			\smallskip
		Let $g(X)\in K[X]$ be any polynomial with no double root and $\varphi(X,Z)\in K[X,Z]$ be monic in $Z$ with {\rm deg}$_Z(\varphi) \geq2$ such that $(\varphi(X,Z),\varphi_Z(X,Z))=K[X,Z]$. Consider the set \[\Sigma=\left\{A_n\middle|A_n=\frac{K[X,Y,Z]}{(X^ng(X)Y-\varphi{(X,Z)})},n\geq2 \right\}.\] Then $\Sigma$ provides a family of pairwise non-isomorphic surfaces that are stably isomorphic.
		\end{thm}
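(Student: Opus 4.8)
The plan is to establish the two halves of the statement separately: (1) any two members of $\Sigma$ are stably isomorphic over $K$, and (2) no two of them are $K$-isomorphic. Part (1) will follow by iterating Theorem~\ref{t7.1}; part (2) by reading off the multiset of multiplicities of the defining polynomial, an invariant supplied by Theorem~\ref{t6.1}(ii). As preparation, I would write $F_n := X^n g(X)$ for the defining polynomial of $A_n$; after rescaling $Y$ we may assume $g$ (hence each $F_n$) is monic, which changes neither the $K$-isomorphism class of $A_n$ nor the monicity of $\varphi$ in $Z$; and by Lemma~\ref{l3.1}(i) each $A_n$ is a $2$-dimensional affine $K$-domain, so the members of $\Sigma$ genuinely are surfaces.

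For (1): fix $n \ge 2$. As $n \ge 2$, $F_n = X^n g(X)$ has $0$ as a double root, and by hypothesis $\varphi$ is monic in $Z$ with $\deg_Z(\varphi) \ge 2$ and $(\varphi, \varphi_Z) = K[X,Z]$; thus $A_n$ satisfies the standing hypotheses of this section with distinguished double root $0$, and Theorem~\ref{t7.1} yields a $K$-algebra isomorphism $A_n^{[1]} \cong B_n^{[1]}$, where $B_n = K[X,Y,Z]/(X^{n-1}g(X)Y - \varphi(X,Z))$. For $n \ge 3$ one has $B_n = A_{n-1} \in \Sigma$, so $A_n^{[1]} \cong_K A_{n-1}^{[1]}$; descending from $n$ to $2$ gives $A_n^{[1]} \cong_K A_2^{[1]}$ for every $n \ge 2$, whence $A_n^{[1]} \cong_K A_m^{[1]}$ for all $n, m \ge 2$.

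For (2): suppose $T \colon A_n \to A_m$ is a $K$-isomorphism with $n, m \ge 2$. Both surfaces fit the framework of Section~4 ($\varphi$ is monic in $Z$ with $\deg_Z(\varphi) \ge 2$, and $\deg F_n = n + \deg g \ge 2$), so Theorem~\ref{t6.1}(i)--(ii) forces $F_n$ and $F_m$ to have the same number of distinct monic irreducible factors in $K[X]$ and the same multiset of multiplicities. Since $g$ has no double root it is squarefree in $K[X]$, so for every monic irreducible $\pi$ one has $v_\pi(F_n) = n\,[\pi = X] + v_\pi(g)$, which is $0$ or $1$ when $\pi \ne X$ and equals $n + v_X(g) \in \{n,\, n+1\}$ when $\pi = X$. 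Hence $X$ is the unique irreducible factor of $F_n$ of multiplicity $\ge 2$, with multiplicity exactly $n + v_X(g)$; matching the multisets forces $n + v_X(g) = m + v_X(g)$, i.e. $n = m$. Therefore the $A_n$, $n \ge 2$, are pairwise non-$K$-isomorphic, and together with (1) this makes $\Sigma$ a family of counterexamples to Question~\ref{q1}. (When $g$ has a root $\ne 0$, $F_n$ has at least two distinct roots, so Corollary~\ref{c6.3} moreover shows no $A_n$ is isomorphic to a classical surface $K[X,Y,Z]/(X^kY - Q(X,Z))$, so the examples are new.)

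I expect no serious obstacle: the whole argument is an assembly of Theorems~\ref{t7.1} and~\ref{t6.1}. The one point that needs care is recognizing that Theorem~\ref{t6.1}(ii) hands us exactly the right invariant — the unordered list of multiplicities of the base polynomial — and that squarefreeness of $g$ makes this list pin down $n$; the remaining items (verifying the hypotheses of Theorems~\ref{t7.1} and~\ref{t6.1}, the harmless normalization to monic $g$, and the downward iteration) are bookkeeping.
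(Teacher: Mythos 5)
Your proposal is correct and follows exactly the paper's route: the paper's proof of Theorem~\ref{th5.2} is precisely ``follows from Theorems~\ref{t6.1} and~\ref{t7.1}'', i.e.\ iterate Theorem~\ref{t7.1} to get stable isomorphism within $\Sigma$ and use the matching of prime-factor multiplicities from Theorem~\ref{t6.1}(i)--(ii) to force $n=m$. Your write-up merely makes explicit the bookkeeping (descent from $A_n$ to $A_2$, squarefreeness of $g$ pinning down the multiplicity of $X$) that the paper leaves to the reader.
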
   
			\begin{proof}
				Follows from Theorems \ref{t6.1} and \ref{t7.1}.
			\end{proof}
	
			\appendix
    \section{Appendix}
    		
    	\indent 
    	
    	 In literature, geometric definitions of Danielewski surfaces, accommodating certain subfamily of the surfaces $X^nY-\phi(X,Z)$, play a central role in studying these surfaces geometrically. One such geometric definition has been provided in \cite{jauslin_poloni}.
    	In this section we extend the definition in \cite{jauslin_poloni} in a natural way and we show that our algebraically defined surfaces under consideration satisfy this geometric definition under some conditions.
    	\begin{Defn}
    		\medskip
    		A General Danielewski surface over an algebraically closed field $K$ is a smooth affine algebraic variety S equipped with an $\mathbb{A}^1$-fibration $\pi: S\longrightarrow \mathbb{A}^1$ for which there exist a finite set $\Omega\subset \mathbb{A}^1$ and a fixed positive integer $d\geq2$ such that exceptional fibers at each point of $\Omega$ is a disjoint union of $d$ many lines and the general fibers at any other point is a line.
    	\end{Defn} 
    	\smallskip
    	We now show that under some restriction General Danielewski surfaces defined by a polynomial of the form $f(X)Y-\varphi(Z,X)$ satisfies the above definition of Danielewski surfaces. 
    	\begin{lem}
    		\medskip
    		Let $K$ be an algebraically closed field. Suppose $A=\frac{K[X,Y,Z]}{(f(X)Y-\varphi(X,Z) )} $ where $f=X^r+a_{r-1}X^{r-1}+...+a_1X+a_0$ is a monic polynomial in $X$ and $\varphi(X,Z)=Z^d+c_{d-1}(X)Z^{d-1}+...c_1(X)Z^1+c_0(X)$ such that $d\geq2,r\geq2$. Further suppose that for every root $\lambda$ of $f(X)$ the polynomial $\varphi(Z,\lambda) $ has distinct roots, then the inclusion map $k[x]\longrightarrow A$ is an $\mathbb{A}^1$-fibration for which the corresponding map between the associated affine varieties satisfies the above definition.  
    	\end{lem}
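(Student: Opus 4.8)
The plan is to verify, one at a time, the three requirements in the definition of a General Danielewski surface: that $S:=\Spec(A)$ is a smooth affine variety, that $\pi\colon S\to\mathbb{A}^1=\Spec K[x]$ is an $\mathbb{A}^1$-fibration, and that the closed fibres have the prescribed shape. The finite set $\Omega$ will be the zero set of $f$ in $\mathbb{A}^1$, and the fixed integer will be $d=\deg_Z\varphi\geq 2$. Throughout I read $\varphi(Z,\lambda)$ in the statement as the specialisation $\varphi(\lambda,Z)\in K[Z]$ of $\varphi$ at $X=\lambda$.

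\textbf{Smoothness.} Put $F=f(X)Y-\varphi(X,Z)\in K[X,Y,Z]$, so $S$ is the hypersurface $V(F)\subset\mathbb{A}^3_K$, and it is a variety since $A$ is a domain by Lemma \ref{l3.1}(i). A singular point of $V(F)$ must satisfy $F=\partial_Y F=\partial_Z F=0$. From $\partial_Y F=f(X)$ we get that its $X$-coordinate is a root $\lambda$ of $f$; then $F=0$ forces $\varphi(\lambda,Z)=0$ while $\partial_Z F=-\varphi_Z(\lambda,Z)=0$, i.e.\ the monic polynomial $\varphi(\lambda,Z)\in K[Z]$ of degree $d\geq 2$ has a common zero with its $Z$-derivative. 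Over the algebraically closed field $K$ this forces $\varphi(\lambda,Z)$ to have a repeated root: if $\varphi_Z(\lambda,Z)\not\equiv 0$ this is immediate, and if $\varphi_Z(\lambda,Z)\equiv 0$ then $\operatorname{char}K=p>0$ and $\varphi(\lambda,Z)\in K[Z^p]$, which over $K$ is a product of $p$-th powers of linear forms and so again has a repeated root (it is nonconstant as $d\geq 2$). Either way we contradict the hypothesis that $\varphi(\lambda,Z)$ has distinct roots. Hence $V(F)$ has no singular point and $S$ is smooth.

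\textbf{$\mathbb{A}^1$-fibration and closed fibres.} By Lemma \ref{l3.1}(iii), localising at $S_0:=K[x]\setminus\{0\}$ gives $S_0^{-1}A=K(x)[z]=K(x)^{[1]}$, so the generic fibre of $\pi$ is the affine line over $K(x)$; thus $\pi$ is an $\mathbb{A}^1$-fibration. For a closed point $c\in K=\mathbb{A}^1(K)$ the fibre is $\pi^{-1}(c)=\Spec\bigl(A/(x-c)\bigr)=\Spec\bigl(K[Y,Z]/(f(c)Y-\varphi(c,Z))\bigr)$. If $f(c)\neq 0$ then $f(c)\in K^\ast$, so $Y=f(c)^{-1}\varphi(c,Z)$ and $A/(x-c)\cong K[Z]\cong K^{[1]}$, so the fibre is a line. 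If $f(c)=0$, say $c=\lambda$, then $A/(x-\lambda)\cong K[Y,Z]/(\varphi(\lambda,Z))$, and by hypothesis $\varphi(\lambda,Z)=\prod_{i=1}^{d}(Z-\mu_i)$ with $\mu_1,\dots,\mu_d\in K$ pairwise distinct; the Chinese Remainder Theorem gives $A/(x-\lambda)\cong\prod_{i=1}^{d}K[Y,Z]/(Z-\mu_i)\cong\prod_{i=1}^{d}K[Y]$, so $\pi^{-1}(\lambda)$ is a disjoint union of $d$ affine lines. Taking $\Omega=\{\lambda\in K: f(\lambda)=0\}$, which is finite (and non-empty, since $\deg f=r\geq 2$ and $K=\overline K$), and the integer $d\geq 2$, all conditions of the definition are satisfied.

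I expect the only genuinely delicate point to be the positive-characteristic case of the smoothness step, namely that the distinct-roots hypothesis on $\varphi(\lambda,Z)$ rules out the degenerate possibility $\varphi_Z(\lambda,Z)\equiv 0$; this is handled by the observation that a nonconstant element of $K[Z^p]$ over an algebraically closed field of characteristic $p$ always has a repeated root. Everything else is a direct computation using Lemma \ref{l3.1}.
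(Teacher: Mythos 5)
Your proposal is correct and follows essentially the same route as the paper: compute the fibres $A/(x-c)$, getting $K^{[1]}$ when $f(c)\neq 0$ and, via the Chinese Remainder Theorem and the distinct-roots hypothesis, a disjoint union of $d$ lines when $f(c)=0$, with smoothness checked by the Jacobian (determinant) criterion. The only differences are elaborations rather than a different method: you spell out the smoothness verification (including the characteristic-$p$ case $\varphi_Z(\lambda,Z)\equiv 0$) that the paper dismisses as "easily checked", and you certify the $\mathbb{A}^1$-fibration via the generic fibre $S_0^{-1}A=K(x)[z]$ where the paper instead records flatness of $A$ over $K[x]$ together with the fibre computations.
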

    	\begin{proof}
    		\smallskip
    		Let $S$ be the affine variety having the coordinate ring $A$. From the assumption in the lemma, using the determinant condition it can be easily checked that $S$ is a smooth surface. Also, we note that $A$ is a finitely generated flat algebra over $K[x]$. Let $\alpha=f(\lambda) \neq 0$, in that  case 
    		
    		$$A\otimes_{K[x]}\left(\frac{K[x-\lambda]}{(x-\lambda)}\right)_{(x-\lambda)}=\frac{K[Y,Z]}{(\alpha Y-\varphi(Z,\lambda))}\cong K^{[1]}.$$
    		\indent
    		Now suppose $f(\lambda)=0$ then from the assumption we have $\varphi(Z,\lambda)$ has $d$ many distinct roots say $\mu_1,\mu_2,...,\mu_d$. In this case we have 
    		$$A\otimes_{k[x]}\left(\frac{K[x-\lambda]}{(x-\lambda)}\right)_{(x-\lambda)}\cong \frac{K[Y,Z]}{(\varphi(Z,\lambda))}=\frac{K[Y][Z]}{(Z-\mu_1)}\times \frac{K[Y][Z]}{(Z-\mu_2)}\times...\times\frac{K[Y][Z]}{(Z-\mu_d)} $$
    		\indent
    		This proves that the induced map $i^*: S\longrightarrow \mathbb{A}^1$ satisfies the requirements of an $\mathbb{A}^1$-fibration so that $S$ becomes a Danielewski surface.
    	\end{proof}
    	 It is worth noting that in the geometric definition of Danielewski surface, the surface has to be smooth and hence normal. However if an arbitrary surface is defined by the polynomial $f(X)Y-\varphi(Z,X)$ without the assumptions as in the above lemma then the surface need not be smooth. For example consider the surface $A=\frac{K[X,Y,Z]}{(X^2Y-Z^2 )}$, this surface is not even normal, hence it can not be smooth.
    	
	\bigskip
	\noindent
	{\bf Acknowledgements}
   I thank Prof. Amartya Kumar Dutta, and Prof. Neena Gupta for going through the draft and for their valuable suggestions that improved the readability of this article substantially. My sincere thanks to Parnashree Ghosh for introducing me to the work of A. C. Bianchi and M. O. Veloso \cite{veloso} and for suggesting to investigate their work over arbitrary fields. I thank Prof. Angelo Calil Bianchi for one helpful mail communication between us in 2024.

		\end{document}